\DeclareMathOperator{\Norm}{\mathcal{N}}
\DeclareMathOperator{\avg}{avg}
\newcommand{\R}{\mathbb{R}}
\newcommand{\N}{\mathbb{N}}
\newtheorem{theorem}{Theorem}[section]
\newtheorem{lemma}[theorem]{Lemma}
\newtheorem{proposition}[theorem]{Proposition}
\newtheorem{assumption}[theorem]{Assumption}
\theoremstyle{definition}\newtheorem{defn}[theorem]{Definition}
\newcommand{\raff}{\texttt{RAFF}}
\newcommand{\lmed}{\texttt{LMED}}
\newcommand{\lmlovo}{\texttt{LM-LOVO}}
\title{A robust method based on LOVO functions for solving least
  squares problems\footnote{This project was supported by Fundação
    Araucária, proc. number 002/2017 -- 47223}} %
\author{ %
  \href{https://orcid.org/0000-0001-9718-6486}{\includegraphics[scale=0.6]{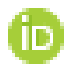}}
  E. V. Castelani\footnote{Department of Mathematics, State University
    of Maringá, Paraná,
    Brazil}\addtocounter{footnote}{-1}\addtocounter{Hfootnote}{-1} %
  \and R. Lopes\footnotemark
  \addtocounter{footnote}{-1}\addtocounter{Hfootnote}{-1} %
  \and
  \href{https://orcid.org/0000-0002-7790-6703}{\includegraphics[scale=0.6]{figure1.eps}}
  W. V. I. Shirabayashi\footnotemark
  \addtocounter{footnote}{-1}\addtocounter{Hfootnote}{-1}%
  \and
  \href{https://orcid.org/0000-0003-4963-0946}{\includegraphics[scale=0.6]{figure1.eps}}
  F. N. C. Sobral \footnotemark\ \footnote{{\faEnvelopeO}~Corresponding author,
    \texttt{fncsobral@uem.br}} %
}
\begin{document}
\maketitle
	
\begin{abstract}
  The robust adjustment of nonlinear models to data is considered in
  this paper. When data comes from real experiments, it is possible
  that measurement errors cause the appearance of discrepant values,
  which should be ignored when adjusting models to them. This work
  presents a Lower Order-value Optimization (LOVO) version of the
  Levenberg-Marquardt algorithm, which is well suited to deal with
  outliers in fitting problems. A general algorithm is presented and
  convergence to stationary points is demonstrated. Numerical results
  show that the algorithm is successfully able to detect and ignore
  outliers without too many specific parameters. Parallel and
  distributed executions of the algorithm are also possible, allowing
  for the use of larger datasets. Comparison against publicly
  available robust algorithms shows that the present approach is able
  to find better adjustments in well known statistical models.
\end{abstract}

  \noindent
  \textbf{Keywords}: Lower Order-Value Optimization,
  Levenberg-Marquardt, Outlier Detection, Robust Least Squares

  \section{Introduction}
\label{sec:intro}
In this work we are interested in studying the following problem:
given a dataset $\mathcal{R}=\{(t_i,y_i), i=1,...,r\}$ of points in
$\R^m\times \R$, resulting from some experiment, we want to find a
model $\varphi:\R^{m}\rightarrow \R$ for fitting this dataset free
from influence of possible outliers. In a more precise way, given a model $\varphi(t)$
depending on $n$ parameters ($x\in \R^n$), that is,
$\varphi(t)=\phi(x,t)$, we want to find a set
$\mathcal{P}\subset \mathcal{R}$ with $p$ elements and parameters
$\overline{x}\in \R^{n}$, such that
$\phi(\overline{x},t_i)\approx y_i$,
$\forall (t_i,y_i)\in \mathcal{P}$ (in the least squares sense). The
$r-p$ remaining elements in $\mathcal{R}-\mathcal{P}$ are the possible
outliers.

There are several definitions of what an outlier is. The definition
that best suits the present work concerns to errors in $y_i$, that is,
grotesque errors in evaluation of some measure for a given and
  reasonably precise $t_i$. This is somewhat different from the
geometric interpretation of outliers, in the sense that the point
$(t_i, y_i)$ is (geometrically) very far from the graph of a function
that one wants to find.
Typically in our tests, outliers are present when there are errors
resulting from the measurement of some experiment. As a consequence,
their presence may contaminate the obtained model and, therefore,
deteriorate or limit its use. There are several strategies to handle
the presence of outliers in datasets~\cite{Cook77, Hampel86,
  Rousseeuw1984, shif88}.  In a more recent approach, as highlighted
by \cite{Hodge2004} and references therein, techniques based on
machine learning are exploited in the context of deal with a large
amount of data, lack of models and categorical variables.
 
In order to get a fitting model free from influence of outliers, we
use an approach based on {\it Low Order-Value Optimization} (LOVO)
\cite{Andreani2009} which is defined as follows. Consider
$R_i:\R^n \rightarrow \R$, $i=1,...,r$. Given $x \in \R^n$, we can
sort $\{R_i(x), i=1,...,r\}$ in ascending order:
\begin{equation}\label{eq:lovo_order}
  R_{i_1(x)}(x) \leq R_{i_2(x)}(x)\leq ... \leq R_{i_k(x)}(x) \leq \dots \leq R_{i_r(x)}(x),
\end{equation}
where $i_k(x)$ is the $i_k$-th smallest element in that set, for the
given value of $x$.  Given $0<p\leq r$, the LOVO function is defined by

\begin{equation}\label{eq:lovo_fun}
  S_p(x)=\sum_{k=1}^{p} R_{i_k(x)}(x)
\end{equation}
and the LOVO problem is
\begin{equation}\label{eq:lovo_pro1}
  \min S_p(x).
\end{equation}

Essentially, this problem can be seen as a generalization of nonlinear least squares, as elucidated in \cite{Andreani2009}. To reiterate this affirmation, we can consider $\varphi(t)=\phi(x,t)$ as the model selected for fitting,and define $R_i(x)= \dfrac{1}{2}(F_i(x))^2$, where $F_i(x)=y_i-\phi(x,t_i), i=1,...,r$. Thus, we have the particular LOVO problem 
\begin{equation}\label{eq:lovo_main}
  \min S_p(x)=\min \sum_{k=1}^{p}R_{i_k(x)}(x)= \min \sum_{k=1}^p \frac{1}{2}(F_{i_k(x)}(x))^2.
\end{equation}

Each $R_i$ is a residual function. Consequently, if we assume $p=r$
the LOVO problem is the classical least squares problem. When $p<r$
the parameter $\bar{x}\in \mathbb{R}^n$ that
solves~\eqref{eq:lovo_main} defines a model $\phi(\bar{x},t)$ free
from the influence of the worst $r - p$ deviations. Throughout this
work, $p$ is also know as the number of \emph{trusted} points.

Several applications can be modeled in the LOVO context, as is
illustrated in~\cite{Martinez2007, Andreani2009, Andreani2008,
  Birgin2011, JiangHZ17}. An excellent survey about LOVO problems and
variations is given in \cite{Martinez2012a}. Although it is well known
that LOVO deals with detection of outliers, there is a limitation: the
mandatory definition of the value $p$, which is associated to the
number of possible outliers. This is the main gap that this paper
intends to fill. We present a new method that combines a voting schema
and an adaptation of the Levenberg-Marquardt algorithm in context of
LOVO problems.

Levenberg-Marquardt algorithms can be viewed as a particular case of
trust-region algorithms, using specific models to solve nonlinear
equations. In~\cite{Andreani2008}, a LOVO trust-region algorithm is
presented with global and local convergence properties and an
application to protein alignment problems. Second-order derivatives
were needed in the algorithm for the local convergence analysis. In
least-squares problems, as the objective function has a well known
structure, Levenberg-Marquardt algorithms use a linear model for the
adjustment function, instead of a quadratic model for the general
nonlinear function. This approach eliminates the necessity of using
second-order information for the model. More details of
Levenberg-Marquardt methods can be found in~\cite{Nocedal2006}.

In~\cite{Sreevidya2014}, outlier detection techniques are classified
in 7 groups for problems of data streaming: Statistic-based,
depth-based, deviation-based, distance-based, clustering-based,
sliding-window-based and autoregression-based. In~\cite{Yu2010a},
classification is divided only between geometric and algebraic
algorithms for robust curve and surface fitting. The approach used in
this work is clearly algebraic, strongly based in the fact that the
user knows what kind of model is to be used. Although models are used,
we make no assumption on the distribution of the points, so we do not
fit clearly in any of the types described in~\cite{Sreevidya2014}. We
also make the assumption that the values $t_i$ are given exactly, what
is called as \emph{fixed-regressor model} in~\cite{Nocedal2006}.

This work deals with the robust adjustment of models to data. A new
version of the Levenberg-Marquardt algorithm for LOVO problems is
developed, so the necessity of second-order information is avoided. In
addition, the number of possible outliers is estimated by a voting
schema. The main difference of the proposed voting schema is that it
is based in the values of $p$ which has, by definition, a discrete
domain. In other techniques, such as the Hough
Transform~\cite{hough1962,duda1971use,illingworth1988survey,xu1990new},
continuous intervals of the model's parameters are discretized. Also,
the increase in the number of parameters to adjust does not impact the
proposed voting system. The main improvements of this work can be
stated as follows
   \begin{itemize}
   \item a Levenberg-Marquardt algorithm with global convergence for
     LOVO problems is developed, which avoids the use of second-order
     information such as in~\cite{Andreani2008};
   \item a voting schema based on the values of $p$ is developed,
     whose size does not increase with the size or discretization of
     the parameters of the model;
   \item extensive numerical results are presented, which show the
     behavior of the proposed method and are also freely available for
     download.
   \end{itemize}

This work is organized as follows. In Section~\ref{sec:lm-lovo} we
describe the Levenberg-Marquardt algorithm in the LOVO context and
demonstrate its convergence properties. In Section~\ref{sec:raff} the
voting schema is discussed, which will make the LOVO algorithm
independent of the choice of $p$ and will be the basis of the robust
fitting. Section~\ref{sec:impl} is devoted to the discussion of the
implementation details and comparison against other algorithms for
robust fitting. Finally, in Section~\ref{sec:conclusions} we draw some
conclusions of the presented strategy.


\section{The Levenberg-Marquardt method for LOVO problems}
\label{sec:lm-lovo}

Following \cite{Andreani2009}, let us start this section by pointing
out an alternative definition of LOVO problems for theoretical
purposes. Denoting $\mathcal{C}=\{\mathcal{C}_1,...,\mathcal{C}_q\}$
the set of all combinations of the elements $\{1,2,...,r\}$ taken $p$
at a time, we can define for each $i\in \{1,...,q\} $ the following
functions
\begin{equation}\label{eq:lovo_fun_2}
  f_i(x)=\sum_{k \in \mathcal{C}_i} R_k(x)
\end{equation}
and 
\begin{equation}\label{eq:lovo_fun_3}
  f_{min}(x)=\min \{f_i(x),i=1,...,q\}.
\end{equation}
It is simple to note that $S_p(x)=f_{min}(x)$ which is a useful
notation in our context. Moreover, it is possible to note that $S_p$
is a continuous function if $f_i$ is a continuous function for all
$i=1,...,q$, but, even assuming differentiability over $f_i$, we
cannot guarantee the same for $S_p$. In addition, since
$R_k(x) = \dfrac{1}{2}(F_k(x))^2,k\in \mathcal{C}_i,i=1,...,q$ we can
write
\begin{equation}\label{eq: lovo_fun_F}
  f_{i}(x)=\frac{1}{2}\sum_{k\in \mathcal{C}_i} F_{k}(x)^2=\dfrac{1}{2}\|F_{\mathcal{C}_i}(x)\|^2_2.
\end{equation}
Throughout this work, following~\eqref{eq: lovo_fun_F}, given a set
$\mathcal{C}_i \in \mathcal{C}$, $F_{\mathcal{C}_i}(x): \R^n \to \R^p$
will always refer to the map that takes $x$ to the $p$-sized vector
composed by the functions $F_k(x)$ defined by~\eqref{eq:lovo_main},
for $k \in \mathcal{C}_i$ in any fixed order. Similarly,
$J_{\mathcal{C}_i}(x)$ is defined as the Jacobian of this
map. Additionally, we assume the continuous differentiability for
$F_i$, $i=1,...,r$.
	
	The goal of this section is to define a version of Levenberg-Marquardt method (LM) 
	to solve the specific problem (\ref{eq:lovo_main}), for a given $p$, as well as a result on global convergence. The new version  will be called  by simplicity \lmlovo. It is well known that the Levenberg-Marquardt method proposed in \cite{More1978} is closely related to trust-region methods and our approach is based on it. Consequently, some definitions and remarks are necessary. 

	\begin{defn}
		Given $x \in \mathbb{R}^n$ we define the {\it minimal function set of $f_{min}$ in $x$} by
		$$I_{min}(x)=\{i\in\{1,\dots,q\} \ | \ f_{min}(x)=f_i(x)\}.$$
	\end{defn}

	In order to define a search direction for \lmlovo\ at the current point  $x_k$, we choose an index $i \in I_{min}(x_k)$ and compute the direction defined by the classical Levenberg-Marquardt method using $f_i(x)$, that is, the search direction $d_k \in \mathbb{R}^n$ is defined as the solution of
	\begin{equation*} 
		\min_{d \in \mathbb{R}^n}m_{k,i}(d)=\dfrac{1}{2}\Vert F_{\mathcal{C}_i}+J_{\mathcal{C}_i}(x_k)d\Vert_2^2 +\dfrac{\gamma_k}{2}\Vert d \Vert_2^2,
	\end{equation*}
	where $\gamma_k \in \mathbb{R}^+$ is the {\it damping parameter}. Equivalently, the direction $d$ can be obtained by
	\begin{equation}\label{eq:alt_direc}
		(J_{\mathcal{C}_i}(x_k)^T J_{\mathcal{C}_i}(x_k) +\gamma_k I)d=-\nabla f_i(x_k),
	\end{equation}
	where $\nabla f_i(x_k)=J_{\mathcal{C}_i}(x_k)^T F_{\mathcal{C}_i}(x_k)$ and $I \in \mathbb{R}^{n \times n}$ is the identity matrix. 

	To ensure sufficient decrease in the defined search direction, we can consider a similar strategy of trust-region methods, which involves monitoring the actual decrease (given by $f_{min}$) and the predicted decrease (given by $m_{k,i}$) at direction $d_k$:
	\begin{equation}\label{eq:rho}
          \rho_{k,i}=\dfrac{f_{min}(x_k)-f_{min}(x_k+d_k)}{m_{k,i}(0)-m_{k,i}(d_k)}.
        \end{equation}

        We observe that, since $i \in I_{min}(x_k)$,
          $m_{k, i}$ is also a local model for $f_{min}$ at $x_k$. In
          practice, $f_{min}$ can safely be replaced by $f_i$
          in~\eqref{eq:rho}. We formalize the conceptual algorithm
        \lmlovo \ in the Algorithm \ref{alg:LOVO-LMcla}.

\begin{algorithm}[H]
  \caption{{\lmlovo} -- Levenberg-Marquardt for the LOVO problem.}
  \label{alg:LOVO-LMcla}

  \KwIn{$x_0\in\R^n$, $\lambda_{min}, \epsilon, \lambda_0 \in\R_{+}$,
    $\overline{\lambda}>1, \mu\in (0,1)$ and $p \in \N$} %
  \KwOut{$x_k$}

  Set $k \leftarrow 0$\; %
  \BlankLine

  \lnl{alg:lm:selection} Select $i_k\in I_{min}(x_k)$\;
  $\lambda \leftarrow \lambda_k$\; 
  
  \nl \If{$\| \nabla f_{i_k}(x_k)\|_2<\epsilon$}{ %
    Stop the algorithm, $x_k$ is an approximate solution for the LOVO
    problem\; %
  } %
  
  \lnl{alg:lm:optim} $\gamma_k \leftarrow \lambda\| \nabla f_{i_k}(x_k)\|^2_2$\; Compute
  $d_k$ the solution of the linear system \eqref{eq:alt_direc}\;
  Calculate $\rho_{k,i_k}$ as described in \eqref{eq:rho}\;

  \lnl{alg:lm:descscriteria} 
  \uIf{$\rho_{k,i_k}<\mu$}{
  $\lambda \leftarrow \overline{\lambda}\lambda$\;
  Go back to the Step~\ref{alg:lm:optim}\;
  } %
  \Else{ %
    Go to the Step~\ref{alg:lm:successstep}\;
  }
  
    \lnl{alg:lm:successstep}
    $\lambda_{k+1} \in
    [\max\{\lambda_{min},\lambda/\overline{\lambda}\},\lambda]$\;
    $x_{k+1} \leftarrow x_k + d_k$\;
    $k \leftarrow k+1$ and go back to the Step~\ref{alg:lm:selection} \;
\end{algorithm}

In what follows, we show that Algorithm~\ref{alg:LOVO-LMcla} is well
defined and converges to stationary points of the LOVO problem. We
begin with some basic assumptions on the boundedness of the points
generated by the algorithm and on the smoothness of the involved
functions.

\begin{assumption}\label{hip:lips-cont-lovo}
	The level set $$C(x_0)=\{x\in\R^n \ | \ f_{min}(x)\le f_{min}(x_0)\}$$ is a bounded set of $\R^n$ and the functions $f_i$, $i = 1,\dots,r$, have Lipschitz continuous gradients with Lipschitz constants $L_i>0$ in an open set containing $C(x_0)$.
\end{assumption}

The next proposition is classical in the literature of trust-region
methods and ensures decrease of $m_{k,{i_k}}(.)$ on the Cauchy
direction. It was adapted to the LOVO context.

\begin{proposition}\label{pro3}
Given $x_k\in\mathbb{R}^n$, $\gamma\in\mathbb{R}_+$ and ${i_k}\in\{1,\dots,r\}$, the descent direction obtained from  
$$\widehat{t}=argmin_{t\in\R} \ \ \{m_{k,i_k}(-t\nabla f_{i_k}(x_k)\}$$
and expressed by $d^C(x_k)=-\widehat{t}\nabla f_{i_k}(x_k)\in\mathbb{R}^n$, satisfies
\begin{equation}\label{eq:dec-dir-cauchy}
m_{k,i_k}(0)-m_{k,i_k}(d^C(x_k)) \ge 
\dfrac{\theta\|\nabla f_{i_k}(x_k)\|^2_2}{2(\| J_{\mathcal{C}_i}(x_k)\|^2_2 + \gamma)}.
\end{equation}
\end{proposition}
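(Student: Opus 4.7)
The plan is to follow the standard Cauchy-point argument from trust-region theory, adapted directly to the Levenberg--Marquardt model $m_{k,i_k}$. First I would put the model in canonical quadratic form by expanding the squared norm:
\begin{equation*}
  m_{k,i_k}(d) = f_{i_k}(x_k) + \nabla f_{i_k}(x_k)^T d + \tfrac{1}{2}\, d^T B_k d,
\end{equation*}
where $B_k = J_{\mathcal{C}_{i_k}}(x_k)^T J_{\mathcal{C}_{i_k}}(x_k) + \gamma I$. The matrix $B_k$ is symmetric positive definite whenever $\gamma>0$, which guarantees that the one-dimensional minimization below is well posed and that $-\nabla f_{i_k}(x_k)$ is a genuine descent direction.

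Next I would substitute $d = -t\,\nabla f_{i_k}(x_k)$ and write the resulting univariate quadratic in $t$, with $g = \nabla f_{i_k}(x_k)$:
\begin{equation*}
  m_{k,i_k}(-t g) - f_{i_k}(x_k) = -t\,\|g\|_2^2 + \tfrac{t^2}{2}\, g^T B_k g.
\end{equation*}
Setting the derivative with respect to $t$ to zero yields the closed form
\begin{equation*}
  \widehat{t} = \frac{\|g\|_2^2}{g^T B_k g},
\end{equation*}
which is well defined because $g^T B_k g > 0$ whenever $g \neq 0$. Plugging $\widehat t$ back into the expression gives the exact decrease
\begin{equation*}
  m_{k,i_k}(0) - m_{k,i_k}(d^C(x_k)) = \frac{\|g\|_2^4}{2\, g^T B_k g}.
\end{equation*}

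Finally, to convert this identity into the stated lower bound I would estimate the denominator using the spectral norm:
\begin{equation*}
  g^T B_k g \;\le\; \|B_k\|_2\, \|g\|_2^2 \;\le\; \bigl(\|J_{\mathcal{C}_{i_k}}(x_k)\|_2^2 + \gamma\bigr)\|g\|_2^2,
\end{equation*}
where the second inequality uses subadditivity of $\|\cdot\|_2$ together with $\|J^T J\|_2 = \|J\|_2^2$. Dividing yields exactly
\begin{equation*}
  m_{k,i_k}(0) - m_{k,i_k}(d^C(x_k)) \;\ge\; \frac{\|g\|_2^2}{2\bigl(\|J_{\mathcal{C}_{i_k}}(x_k)\|_2^2 + \gamma\bigr)},
\end{equation*}
which matches the claim with $\theta = 1$; presumably the factor $\theta \in (0,1]$ is kept in the statement as a generic constant, so that the estimate continues to hold for any approximate Cauchy step that provides a fixed fraction of the exact decrease. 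The whole argument is essentially a routine computation, so there is no real obstacle: the only thing that must be tracked carefully is that $\gamma>0$ makes $B_k$ positive definite, which is what legitimizes both the existence of $\widehat t$ and the two-sided control on $g^T B_k g$.
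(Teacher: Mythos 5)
Your computation is correct and is exactly the standard Cauchy-point argument that the paper itself does not spell out but delegates to its citation of Bergou et al., Section 3 --- so you have in effect supplied the proof the paper omits: expanding $m_{k,i_k}$ into the quadratic $f_{i_k}(x_k)+g^Td+\tfrac12 d^TB_kd$ with $B_k=J_{\mathcal{C}_{i_k}}(x_k)^TJ_{\mathcal{C}_{i_k}}(x_k)+\gamma I$, minimizing along $-g$ to get the exact decrease $\|g\|_2^4/(2\,g^TB_kg)$, and bounding $g^TB_kg\le(\|J_{\mathcal{C}_{i_k}}(x_k)\|_2^2+\gamma)\|g\|_2^2$ is precisely the intended route. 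Your closing remark is also apt: the exact minimization gives the bound with $\theta=1$, and since the paper never defines $\theta$, it can only be read as a fraction-of-Cauchy-decrease constant in $(0,1]$, for which your inequality remains valid (and the case $g=0$ is trivial since both sides vanish).
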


\begin{proof}
  The proof follows from~\cite[Section 3]{Bergou2018}.
\end{proof}

Since the Cauchy's step is obtained by the constant that minimizes the
model $m_{k,i_k}(.)$ on the direction of the gradient vector, we can
conclude that
\begin{eqnarray}\label{eq:dec-cauchy}
m_{k,i_k}(0)-m_{k,i_k}(d_k) \ge 
\dfrac{\theta\|\nabla f_{i_k}(x)\|^2_2}{2(\|J_{\mathcal{C}_i}({x_k})\|^2_2 + \gamma)},
\end{eqnarray}
since $d_k\in\mathbb{R}^n$ from \eqref{eq:alt_direc} is the global minimizer of $m_{k,i_k}$.

Inspired by \cite{Bergou2018}, we present Lemma \ref{lem2} that shows
that Step~\ref{alg:lm:successstep} is always executed by
Algorithm~\ref{alg:LOVO-LMcla} if $\lambda$ is chosen big enough.

\begin{lemma}\label{lem2}
	Let $x_k\in\mathbb{R}^n$ and $i_k\in I_{min}(x_k)$ be a vector and an index fixed in 
	the Step~\ref{alg:lm:selection} of the Algorithm~\ref{alg:LOVO-LMcla}. Then, the Step~\ref{alg:lm:optim} of the Algorithm~\ref{alg:LOVO-LMcla} will be 
	executed a finite number of times.
\end{lemma}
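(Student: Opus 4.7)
The plan is to show that for fixed $x_k$ and $i_k \in I_{min}(x_k)$, the trust-ratio test $\rho_{k,i_k}\ge\mu$ is guaranteed to succeed once the damping parameter $\gamma_k=\lambda\|\nabla f_{i_k}(x_k)\|_2^2$ is sufficiently large, which must happen after finitely many executions of Step~\ref{alg:lm:optim} since $\lambda$ is inflated by the factor $\overline{\lambda}>1$ each time the test fails. Because the algorithm has not terminated, I may assume $\|\nabla f_{i_k}(x_k)\|_2\ge\epsilon>0$, so $\gamma_k\to\infty$ as $\lambda\to\infty$.

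First I would pass from $f_{min}$ to the smooth $f_{i_k}$ in the numerator of~\eqref{eq:rho}. Since $i_k\in I_{min}(x_k)$, one has $f_{min}(x_k)=f_{i_k}(x_k)$, while for any trial point $f_{min}(x_k+d_k)\le f_{i_k}(x_k+d_k)$ by the minimum definition~\eqref{eq:lovo_fun_3}. Therefore
\begin{equation*}
\rho_{k,i_k}\;\ge\;\frac{f_{i_k}(x_k)-f_{i_k}(x_k+d_k)}{m_{k,i_k}(0)-m_{k,i_k}(d_k)},
\end{equation*}
and it suffices to bound this smooth ratio from below by $\mu$.

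Next I would rewrite the predicted decrease using the normal equation~\eqref{eq:alt_direc}. Multiplying $(J_{\mathcal{C}_{i_k}}^TJ_{\mathcal{C}_{i_k}}+\gamma_k I)d_k=-\nabla f_{i_k}(x_k)$ on the left by $d_k^T$ gives $d_k^TJ_{\mathcal{C}_{i_k}}^TJ_{\mathcal{C}_{i_k}}d_k+\gamma_k\|d_k\|_2^2=-\nabla f_{i_k}(x_k)^Td_k$, whence a short computation expanding $m_{k,i_k}(d_k)$ yields the clean identity
\begin{equation*}
m_{k,i_k}(0)-m_{k,i_k}(d_k)=-\tfrac{1}{2}\nabla f_{i_k}(x_k)^Td_k\;\ge\;\tfrac{1}{2}\gamma_k\|d_k\|_2^2.
\end{equation*}
For the actual decrease I would apply the standard descent lemma to $f_{i_k}$, using Assumption~\ref{hip:lips-cont-lovo} (valid on $C(x_0)$, which contains every $x_k+d_k$ generated during the inner loop by the monotone behaviour of the successful iterates and, for the trial steps, by taking $\gamma_k$ large enough so that $x_k+d_k$ lies in a fixed bounded neighbourhood of $x_k\in C(x_0)$): $f_{i_k}(x_k+d_k)\le f_{i_k}(x_k)+\nabla f_{i_k}(x_k)^Td_k+\tfrac{L_{i_k}}{2}\|d_k\|_2^2$.

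Combining these estimates produces
\begin{equation*}
\rho_{k,i_k}\;\ge\;\frac{-\nabla f_{i_k}(x_k)^Td_k-\tfrac{L_{i_k}}{2}\|d_k\|_2^2}{-\tfrac{1}{2}\nabla f_{i_k}(x_k)^Td_k}\;=\;2-\frac{L_{i_k}\|d_k\|_2^2}{-\nabla f_{i_k}(x_k)^Td_k}\;\ge\;2-\frac{L_{i_k}}{\gamma_k},
\end{equation*}
where the last inequality uses $-\nabla f_{i_k}(x_k)^Td_k\ge\gamma_k\|d_k\|_2^2$. Thus $\rho_{k,i_k}\ge\mu$ whenever $\gamma_k\ge L_{i_k}/(2-\mu)$, a finite threshold since $\mu\in(0,1)$. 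Because $\lambda$ is multiplied by $\overline{\lambda}>1$ at every failed inner iteration and $\|\nabla f_{i_k}(x_k)\|_2^2\ge\epsilon^2$ is fixed throughout the inner loop, $\gamma_k$ grows without bound at a geometric rate and crosses that threshold in finitely many steps. The main obstacle is the bookkeeping to pass from the non-smooth $f_{min}$ to the smooth $f_{i_k}$ and to verify that the Lipschitz bound applies on a suitable set containing the trial points; everything else is a direct computation using the normal equation and the descent lemma.
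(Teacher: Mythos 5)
Your proof is correct, and it reaches the conclusion by a genuinely different core estimate than the paper. The paper shows $\lim_{\lambda\to\infty}\rho_{k,i_k}=2$ by bounding $|1-\rho_{k,i_k}/2|$ from above: it uses the same reduction from $f_{min}$ to $f_{i_k}$ and the same descent lemma in the numerator, but for the denominator it invokes the Cauchy-point decrease of Proposition~\ref{pro3}, namely $m_{k,i_k}(0)-m_{k,i_k}(d_k)\ge \theta\|\nabla f_{i_k}(x_k)\|_2^2/\bigl(2(\|J_{\mathcal{C}_i}(x_k)\|_2^2+\gamma_k)\bigr)$, together with the bound $\|d_k\|_2\le \|\nabla f_{i_k}(x_k)\|_2/\gamma_k$, ending with an estimate of order $1/\lambda$ that also involves $\|J_{\mathcal{C}_i}(x_k)\|_2$ and the threshold $\epsilon$. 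You instead exploit the exact identity $m_{k,i_k}(0)-m_{k,i_k}(d_k)=-\tfrac12\nabla f_{i_k}(x_k)^Td_k=\tfrac12 d_k^T(J_{\mathcal{C}_{i_k}}^TJ_{\mathcal{C}_{i_k}}+\gamma_kI)d_k\ge\tfrac{\gamma_k}{2}\|d_k\|_2^2$, which yields the one-sided bound $\rho_{k,i_k}\ge 2-L_{i_k}/\gamma_k$ and hence an explicit acceptance threshold $\gamma_k\ge L_{i_k}/(2-\mu)$. This is sharper and more self-contained: it bypasses Proposition~\ref{pro3} entirely and does not need the Jacobian bound or the stopping tolerance $\epsilon$ in the final estimate (only to guarantee $\gamma_k\to\infty$ with $\lambda$, as in the paper). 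Your remark about ensuring the trial point stays in the open set where the Lipschitz property holds (via $\|d_k\|_2\to 0$) is a point the paper glosses over, so you are, if anything, more careful there. Both arguments are valid; the paper's phrasing is the standard trust-region template, while yours buys an explicit, quantitative bound on the number of inner repetitions.
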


\begin{proof}
	To achieve this goal, we will show that
\begin{equation*}
\lim_{\lambda \rightarrow \infty} \rho_{k,i_k}=2.
\end{equation*}

    For each $\lambda$ fixed in the Step~\ref{alg:lm:selection} of the Algorithm~\ref{alg:LOVO-LMcla}, 
    we have that
\begin{eqnarray}\label{eq:1prof-lem2}
\left|1-\dfrac{\rho_{k,i_k}}{2}\right| & = & \left|1-\dfrac{f_{min}(x_{k})-f_{min}(x_{k}+d_{k})}{2(m_{k,i_k}(0)-m_{k,i_k}(d_{k})} \right|
\nonumber \\
& = & \left|\dfrac{2m_{k,i_k}(0)-2m_{k,i_k}(d_{k}) - f_{min}(x_{k})+f_{min}(x_{k}+d_{k})}{2(m_{k,i_k}(0)-m_{k,i_k}(d_{k}))} \right|
\nonumber \\
& = & \left|\dfrac{f_{min}(x_{k}+d_{k})
+f_{min}(x_{k})-2m_{k,i_k}(d_{k})}{2(m_{k,i_k}(0)-m_{k,i_k}(d_{k}))} \right|\nonumber\\
\end{eqnarray}

From Taylor series expansion and the Lipschitz continuity of $\nabla f_{i_k}(x_{k})$
\begin{equation}\label{eq:2prof-lem2}
f_{i_k}(x_{k}+d_{k}) \le f_{i_k}(x_{k}) + 
\nabla f_{i_k}(x_{k})^Td_{k}+ \dfrac{L_{i_k}}{2}\|d_{k}\|^2_2.
\end{equation}

By equation \eqref{eq:2prof-lem2} and the definition of $f_{min}$, we obtain
\begin{equation}\label{eq:3prof-lem2}
f_{min}(x_{k}+d_{k}) \le f_{i_k}(x_{k}+d_{k})\stackrel{\tiny{\eqref{eq:2prof-lem2}}}{\le} f_{i_k}(x_{k})
+ \nabla f_{i_k}(x_{k})^Td_{k} + \dfrac{L_{i_k}}{2}\|d_{k}\|^2_2.
\end{equation}

Using \eqref{eq:3prof-lem2} and the definition of  $m_{k,i_k}$ in~ \eqref{eq:1prof-lem2}, we get
\begin{equation}
  \label{eq:4prof-lem2}
  \begin{split}
    \left|1 - \right.& \left.\dfrac{\rho_{k,i_k}}{2}\right| =
    \left|\dfrac{f_{min}(x_{k}+d_{k})+f_{min}(x_{k})-2m_{k,i_k}(d_{k})}{2(m_{k,i_k}(0)-m_{k,i_k}(d_{k}))}
    \right| \\
    & = \left|\dfrac{f_{min}(x_{k}+d_{k}) +
        f_{i_k}(x_{k})}{2(m_{k,i_k}(0)-m_{k,i_k}(d_{k}))} \right.  -
    \left.\dfrac{\|F_{\mathcal{C}_i}(x_{k})+J_{\mathcal{C}_i}(x_{k})d_{k}\|^2_2
        +\gamma_k\|d_{k}\|^2_2}{2(m_{k,i_k}(0)-m_{k,i_k}(d_{k}))}
    \right| \\
    & = \left|\dfrac{f_{min}(x_{k}+d_{k}) - f_{i_k}(x_{k})-2\nabla
        f_{i_k}(x_{k})^T d_{k} }{2(m_{k,i_k}(0)-m_{k,i_k}(d_{k}))} \right. \\
    & \qquad \left.-
      \dfrac{d_{k}^T\left(
          J_{\mathcal{C}_i}(x_{k})^TJ_{\mathcal{C}_i}(x_{k})+\gamma_{k}I\right)d_{k}}
      {2(m_{k,i_k}(0)-m_{k,i_k}(d_{k}))} \right| \\
    & \stackrel{\tiny{\eqref{eq:alt_direc}}}{=}
    \left|\dfrac{f_{min}(x_{k}+d_{k}) - f_{i_k}(x_{k})- \nabla
        f_{i_k}(x_{k})^Td_{k}}
      {2(m_{k,i_k}(0)-m_{k,i_k}(d_{k}))}\right| \\
    & \stackrel{\tiny{\eqref{eq:3prof-lem2}}}{\le}
    \left|\dfrac{L_{i_k}\|d_k\|^2_2}
      {4(m_{k,i_k}(0)-m_{k,i_k}(d_{k}))} \right|
  \end{split}
\end{equation}

From \eqref{eq:alt_direc} and the definition of $\gamma_k$, we note that
\begin{equation}\label{eq:5prof-lem2}
\|d_{k}\|_2  \le  \dfrac{\|\nabla f_{i_k}(x_{k})\|_2}{\sigma_{k}+\gamma_{k}}
\le \dfrac{\|\nabla f_{i_k}(x_{k})\|_2}{\gamma_{k}}  = \dfrac{1}{\|\nabla f_{i_k}(x_k)\|_2\lambda},
\end{equation}
where $\sigma_{k}=\sigma_{min}(J_{\mathcal{C}_i}(x_{k})^T J_{\mathcal{C}_i}(x_{k}))$ and $\sigma_{min}(B)$ represents the smallest eigenvalue of $B$.

Replacing \eqref{eq:5prof-lem2} in \eqref{eq:4prof-lem2}, we obtain
\begin{equation}
  \label{eq:6prof-lem2}
  \begin{split}
    \left|1-\dfrac{\rho_{k,i_k}}{2}\right| & \le
    \left|\dfrac{\dfrac{L_{i_k}}{\|\nabla
          f_{i_k}(x_{k})\|^2_2\lambda^2}}
      {4(m_{k,i_k}(0)-m_{k,i_k}(d_{k}))} \right|
    \stackrel{\eqref{eq:dec-cauchy}}{\le}
    \left|\dfrac{\dfrac{L_{i_k}}{\|\nabla
          f_{i_k}(x_{k})\|^2_2\lambda^2}} {\dfrac{4\theta\|\nabla
          f_{i_k}(x_{k})\|^2_2}{2(\| J_{\mathcal{C}_i}(x_{k})\|^2_2 +
          \gamma_{k})}} \right| \\
    & = \dfrac{(\| J_{\mathcal{C}_i}(x_{k})\|^2_2 + \gamma_{k})
      L_{i_k} }{2\theta\|\nabla f_{i_k}(x_{k})\|^4_2 \lambda^2} \le
    \left( \dfrac{\| J_{\mathcal{C}_i}(x_{k})\|^2_2}{\epsilon^4} +
      \dfrac{1}{\epsilon^2} \right) \dfrac{L_{i_k}}{2 \theta \lambda},
  \end{split}
\end{equation}
where the last inequality comes from the definition of $\gamma_k$ in
Algorithm~\ref{alg:LOVO-LMcla} and assuming that $\lambda \ge 1$,
which can always be enforced.

Using~\eqref{eq:6prof-lem2}, since the Jacobian $J_{\mathcal{C}_i}$ is
bounded in $C(x_0)$, we conclude that
\begin{equation*}
\lim_{\lambda \rightarrow \infty} \left|1-\dfrac{\rho_{k,i_k}}{2}\right| =0,
\end{equation*}
which proves the result.
\end{proof}

Our studies move toward showing convergence results for Algorithm \ref{alg:LOVO-LMcla} to stationary points. At this point we should be aware of the fact that LOVO problems admit two types of stationary condition: {\it weak} and {\it strong}~\cite{Andreani2008}.

\begin{defn}\label{def:w/s stationarity}
	A point $x^*$ is a weakly critical point of \eqref{eq:lovo_pro1} when $x^*$ is a stationary point of $f_i$ for some $i\in I_{min}(x^*)$. A point  $x^*$ is a strongly critical point of \eqref{eq:lovo_pro1} if $x^*$ is a stationary point of $f_i$ for all $i\in I_{min}(x^*)$.
\end{defn}

Although the strongly critical condition is theoretically interesting, in this work we are limited to the proof of weakly critical, since this second type is less expensive to verify in practice and therefore more common to deal with. The global convergence to weakly critical points is given by Theorem \ref{maintheorem}.

\begin{theorem}\label{maintheorem}
	Let $\{x_k\}_{k\in\N}$ be a sequence generated by Algorithm \ref{alg:LOVO-LMcla}. 
	Consider $\mathcal{K}'=\{ k \ | \ i_k=i \}\subset\N$ an infinite subset of indexes for $i\in\{1,\dots,r\}$ and assume that Assumption \ref{hip:lips-cont-lovo} holds. Then, for all $x_0\in \mathbb{R}^n$, we have 
	$$\lim_{k \in \mathcal{K}'} \|\nabla f_i(x_{k})\|_2=0.$$
\end{theorem}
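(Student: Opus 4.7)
The plan is to argue by contradiction, combined with the standard telescoping descent argument: show that $\{f_{min}(x_k)\}$ is monotone and bounded, extract summability of the predicted-decrease terms via~\eqref{eq:dec-cauchy}, and then exploit a uniform upper bound on $\gamma_k$ along the subsequence to force $\|\nabla f_i(x_k)\|_2 \to 0$.

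First I would establish monotone convergence of $\{f_{min}(x_k)\}$. By Lemma~\ref{lem2} every iteration of Algorithm~\ref{alg:LOVO-LMcla} terminates with an accepted step satisfying $\rho_{k,i_k} \geq \mu$, so $f_{min}(x_{k+1}) \leq f_{min}(x_k) - \mu\,(m_{k,i_k}(0) - m_{k,i_k}(d_k))$. Since $f_{min} \geq 0$, the sequence is non-increasing and bounded below, hence convergent, and all iterates lie in the compact set $C(x_0)$. Telescoping and applying~\eqref{eq:dec-cauchy} yields
\begin{equation*}
\sum_{k=0}^{\infty} \frac{\|\nabla f_{i_k}(x_k)\|_2^2}{\|J_{\mathcal{C}_{i_k}}(x_k)\|_2^2 + \gamma_k} < \infty.
\end{equation*}
Restricted to $k \in \mathcal{K}'$, where $i_k = i$ is fixed, continuity of $J_{\mathcal{C}_i}$ on $C(x_0)$ provides a uniform upper bound $M_J$ on $\|J_{\mathcal{C}_i}(x)\|_2$.

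Next I would argue by contradiction: suppose there exist $\eta > 0$ and an infinite $\mathcal{K}'' \subseteq \mathcal{K}'$ with $\|\nabla f_i(x_k)\|_2 \geq \eta$ for all $k \in \mathcal{K}''$. Replaying the chain of estimates leading to~\eqref{eq:6prof-lem2} with $\eta$ in place of $\epsilon$, the acceptance condition $\rho_{k,i_k} \geq \mu$ is guaranteed at such iterations once $\lambda$ exceeds a threshold $\lambda^\star_\eta$ depending only on $\eta$, $L_i$, $M_J$, and $\mu$. Combined with the doubling step in Step~\ref{alg:lm:optim} and the outer update $\lambda_{k+1} \in [\max\{\lambda_{min}, \lambda/\bar\lambda\}, \lambda]$, this produces a uniform upper bound $\Gamma$ on $\gamma_k$ for $k \in \mathcal{K}''$. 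Every term of the series indexed by $k \in \mathcal{K}''$ is then bounded below by $\eta^2/(M_J^2 + \Gamma) > 0$, which forces the series to diverge and contradicts the summability established above.

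The main obstacle is the uniform bound on $\gamma_k$ along $\mathcal{K}''$. While Lemma~\ref{lem2} readily controls the threshold on $\lambda$ needed for acceptance at a single iteration, iterations outside $\mathcal{K}''$ at which the active gradient $\|\nabla f_{i_k}(x_k)\|_2$ is small could in principle inflate $\lambda_{k+1}$ and so contaminate subsequent iterations in $\mathcal{K}''$. Showing that the outer update rule indeed keeps $\gamma_k$ bounded along $\mathcal{K}''$ is the most delicate part of the argument, and is where the freedom to let $\lambda_{k+1}$ shrink by up to a factor $\bar\lambda$ becomes essential.
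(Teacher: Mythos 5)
Your first step---monotonicity of $\{f_{min}(x_k)\}$, telescoping, and summability of the predicted decreases via~\eqref{eq:dec-cauchy}---is correct, and your choice of the proper negation (an infinite $\mathcal{K}''\subseteq\mathcal{K}'$ on which $\|\nabla f_i(x_k)\|_2\ge\eta$) is actually more faithful to the statement than the paper's own setup. But the proposal has a genuine gap precisely at the step you yourself flag as delicate: the uniform bound $\Gamma$ on $\gamma_k$ along $\mathcal{K}''$ is asserted, not proved, and it does not follow from the ingredients you cite. The acceptance threshold one extracts from~\eqref{eq:6prof-lem2} scales like $1/\|\nabla f_{i_k}(x_k)\|_2^2$, so at iterations \emph{outside} $\mathcal{K}''$ where the active gradient happens to be small, the inner loop at Step~\ref{alg:lm:optim} may drive $\lambda$ to arbitrarily large values before acceptance; since the outer update lets $\lambda_{k+1}$ shrink by at most a factor $\overline{\lambda}$ per iteration, nothing prevents $\lambda_k$ (hence $\gamma_k=\lambda\|\nabla f_{i_k}(x_k)\|_2^2$) from still being enormous at the next index of $\mathcal{K}''$. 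If $\gamma_k\to\infty$ along $\mathcal{K}''$, the terms $\eta^2/(M_J^2+\gamma_k)$ can perfectly well be summable, and no contradiction is obtained. So the argument is incomplete at its decisive step, and the ``freedom to shrink by $\overline{\lambda}$'' you invoke does not close it.

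For comparison, the paper evades this difficulty by negating the conclusion in a stronger form: it supposes $\|\nabla f_i(x_k)\|_2\ge\epsilon$ for \emph{all} $k\ge K$, which is what licenses the claim $\lambda_k\le\lambda_K$ and hence a per-iteration decrease of $f_{min}$ bounded below by the fixed constant $\mu\theta\epsilon^2/c$. (Strictly speaking this only refutes the stronger hypothesis, i.e.\ it yields $\liminf\|\nabla f_i(x_k)\|_2=0$ rather than the stated limit over $\mathcal{K}'$, but it is the device that disposes of the $\gamma_k$ issue there.) To repair your proof you must either adopt the same stronger contradiction hypothesis, or observe that the stopping test of Algorithm~\ref{alg:LOVO-LMcla} guarantees $\|\nabla f_{i_k}(x_k)\|_2\ge\epsilon$ at every non-terminal iteration---this is exactly what produces the $\epsilon^{-4}$ and $\epsilon^{-2}$ factors in~\eqref{eq:6prof-lem2}---which, together with boundedness of $J_{\mathcal{C}_i}$ on $C(x_0)$, gives a single threshold $\lambda^\star$ valid at \emph{every} iteration and hence $\lambda_k\le\max\{\lambda_0,\overline{\lambda}\lambda^\star\}$ for all $k$. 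Either fix makes your summability argument go through; as written, the proposal does not.
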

\begin{proof}
	Clearly, there is an index $i$ chosen an infinite number of times by Algorithm \ref{alg:LOVO-LMcla} since $\{1,\dots,r\}$ is a finite set.

Let us suppose by contradiction that, for this index $i$, there exist $\epsilon>0$ and $K \in\N$ such that $\|\nabla f_i(x_{k})\|_2\ge \epsilon$, for all $k\ge K$.

Therefore, for each  $k\ge K$, we obtain by the Step~\ref{alg:lm:successstep} of Algorithm~\ref{alg:LOVO-LMcla} that $\lambda_{k}\le \lambda_{K}$. Additionally, we get
\begin{align}
& & \dfrac{f_{min}(x_k)-f_{min}(x_k+d_k)}{m_{k,i}(0)-m_{k,i}(d_k)} \ge \mu 
\nonumber\\
& \Leftrightarrow & f_{min}(x_k)-f_{min}(x_{k+1}) 
\ge \mu (m_{k,i}(0)-m_{k,i}(d_k))
\nonumber\\
& \Leftrightarrow & f_{min}(x_k)-f_{min}(x_k+d_k) 
\stackrel{\eqref{eq:dec-cauchy}}{\ge} \mu 
\left(\theta\dfrac{\|\nabla f_i(x_k)\|_2^2}{\|J_{\mathcal{C}_i}(x_k)\|^2_2+\gamma_{k}}\right)
\nonumber\\
& \Leftrightarrow & f_{min}(x_k)-f_{min}(x_k+d_k) \ge 
\left(\dfrac{\mu\theta\epsilon^2}{\|J_{\mathcal{C}_i}(x_k)\|^2_2+\epsilon^2\lambda_k}\right)
\nonumber\\
& \Leftrightarrow & f_{min}(x_k)-f_{min}(x_k+d_k) \ge 
\left(\dfrac{\mu\theta\epsilon^2}{\max_{k\ge K}\{\|J_{\mathcal{C}_i}(x_k)\|^2_2\}+\epsilon^2\lambda_{K}}\right)
\nonumber\\
& \Leftrightarrow & f_{min}(x_k+d_k)-f_{min}(x_k) \le -
\left(\dfrac{\mu\theta\epsilon^2}{c}\right),\label{eq:fminineq}
\end{align}
where $c=\max_{k\ge K}\{\|J_{\mathcal{C}_i}(x_k)\|^2_2\}+\epsilon^2\lambda_{K}$.

Since  $f_{min}$ is bounded from below and decreases by the constant value  $\mu\theta\epsilon^2/c$ at every iteration $k\ge K$, we have, using the continuity of $f_{min}$, that the sequence $f_{min}(x_k+d_k)$ converges to the minimizer of  
\begin{align*}
\min_{x\in\R^n} \quad & f_{min}(x) 
\end{align*}
and, therefore, 
\begin{equation}\label{eq:fminlim}
	\lim_{k \in \mathcal{K}'} f_{min}(x_k+d_k)-f_{min}(x_k)=0.
\end{equation}

Using (\ref{eq:fminineq}) and (\ref{eq:fminlim}) we have that
$$\lim_{k \in \mathcal{K}'} \dfrac{\mu\theta\epsilon^2}{
\max_{k\ge K}\{\|J_{\mathcal{C}_i}(x_k)\|_2\}+\epsilon^2\lambda_k}=0.$$
Consequently $\lim_{k \in \mathcal{K}'} \lambda_k= \infty$, which contradicts inequality 
$\lambda_k\le \lambda_K<\infty$. We conclude that there is no such $\epsilon$ and, therefore,
\[
  \lim_{k \in \mathcal{K}'} \|\nabla f_i(x_{k})\|_2 = 0.
\]
\end{proof}

\section{The voting system}
\label{sec:raff}

The main drawback of Algorithm~\ref{alg:LOVO-LMcla} is the need to
know the number $p$ of trusted points, which is used by
$S_p$~\eqref{eq:lovo_fun} (or, equivalently, by $f_{min}$). It is not
usual to know the exact number of trusted points in any experiment.

To overcome this difficulty, an algorithm for testing different values
of $p$ was created, detailed by Algorithm~\ref{alg:raff}. The
main idea of the method is to call Algorithm~\ref{alg:LOVO-LMcla} for
several different values of $p$ and store the obtained solution. The
solutions are then preprocessed, where stationary points that are not
global minimizers of their respective problem are
eliminated. This elimination is based on the fact that, if
$\bar x_{p}$ and $\bar x_{q}$, $p < q$, are solutions for their
respective problems, then $S_{p}(\bar x_{p})$ cannot be greater than
$S_{q}(\bar x_{q})$ if they are both global minimizers. Therefore,
if $S_p(\bar x_p) > S_q(\bar x_q)$, then $\bar x_{p}$ is not
a global minimizer and can be safely eliminated. The last steps
(Steps~\ref{alg:raff:similarity} and~\ref{alg:raff:voting})
compute the similarity between each pair of solutions and obtain the
most similar ones. Element $C_p$ of vector $C$ stores the number of
times that some other solution was considered similar to
$\bar x_p$, in the sense of a tolerance $\epsilon$. The most similar solution
with greatest $p$ is considered the robust
adjustment model for the problem. Algorithm~\ref{alg:raff} is
a proposal of a voting system, where the solution that was not
eliminated by the preprocessing and occurred with highest frequency
(in the similarity sense) is selected.

\begin{algorithm}

  \caption{Voting algorithm for fitting problems}

  \label{alg:raff}

  \KwIn{$x_0\in\R^n$,
    $\epsilon \in \R_{+}$ and
    $0 \le p_{min} < p_{max}$} %

  \nl Define $C \in \R^s = \mathbf{0}$, where $s = p_{max} - p_{min} + 1$

  \lnl{alg:raff:lmlovo} Compute $\bar x_p \in \R^n$ by calling
  Algorithm~\ref{alg:LOVO-LMcla} for the given $p$, for all
  $p \in \{p_{min},p_{min}+1,...,p_{max}\}$

  \lnl{alg:raff:preprocess} Preprocess solutions

  \lnl{alg:raff:similarity} Let $M_{pq}$ be the similarity between solutions $\bar x_p$ and
  $\bar x_q$

  \lnl{alg:raff:voting} \For{$p = p_{min}, \dots, p_{max}$}{ %

    $k\leftarrow 0$

    \For{$q = p_{min}, \dots, p_{max}$}{ %
      
      \If{$M_{pq} < \epsilon$}{ $k\leftarrow k+1$ }

      } %

    $C_p \leftarrow k$
    
  }

  \nl $x^{\star} \leftarrow \bar x_p$, where
  $p = \underset{q = p_{min}, \dots, p_{max}}{\arg\max} \{ C_q \}$

\end{algorithm}

The execution of Algorithm~\ref{alg:raff} can be easily
parallelizable. Each call of Algorithm~\ref{alg:LOVO-LMcla} with a
different value of $p$ can be performed independently at
  Step~\ref{alg:raff:lmlovo}. All the convergence results from
Section~\ref{sec:lm-lovo} remain valid, so Algorithm~\ref{alg:raff} is
well defined. All the specific implementation details of the algorithm
are discussed in Section~\ref{sec:impl}.

\section{Numerical implementation and experiments} \label{sec:impl}

In this Section we discuss the implementation details of
Algorithms~\ref{alg:LOVO-LMcla} and~\ref{alg:raff}. From now on,
Algorithm~\ref{alg:LOVO-LMcla} will be called {\lmlovo} and
Algorithm~\ref{alg:raff} will be called {\raff}. Both algorithms were
implemented in the Julia language, version 1.0.4 and are available in
the official Julia repository. See~\cite{Castelani2019} for
information about the \texttt{RAFF.jl} package installation and
usage.

Algorithm {\lmlovo} is a sequential nonlinear programming algorithm,
which means that only the traditional parallelization techniques can
be applied. Since fitting problems have small dimension and a large
dataset, the main gains would be the parallelization of the objective
function, not the full algorithm. Matrix and vector operations are
also eligible for parallelization.

Following traditional LOVO
implementations~\cite{Andreani2007curvedect}, the choice of index
$i_k \in I_{min}(x_k)$ is performed by simply evaluating functions
$F_i(x_k)$, $i = 1, \dots, r$, sorting them in ascending order and
them dropping the $r - p$ largest values. Any sorting algorithm can be
used, but we used our implementation of the selection sort
algorithm. This choice is interesting, since the computational cost is
linear when the vector is already in ascending order, what is not
unusual if {\lmlovo} is converging and $i_{k + 1} = i_k$, for example.

The convergence theory needs the sufficient decrease parameter
$\rho_{k, i_k}$ to be calculated in order to define step acceptance
and the update of the damping parameter. In practice, {\lmlovo} uses
the simple decrease test at Step~\ref{alg:lm:descscriteria}
\begin{equation*}
  \label{imp:simpdec}
  f_{min}(x_k + d_k) < f_{min},
\end{equation*}
which was shown to work well in practice.

The computation of direction $d_k$ is performed by solving the linear
system~(\ref{eq:alt_direc}) by the Cholesky factorization of
matrix
$J_{\mathcal{C}_{i_k}}(x_k)^T J_{\mathcal{C}_{i_k}}(x_k) + \lambda_k
I$. In the case where Steps~\ref{alg:lm:optim}
and~\ref{alg:lm:descscriteria} are repeated at the same iteration $k$,
the QR factorization is more indicated, since it can be reused when
the iterate $x_k$ remains the same and only the dumping factor is
changed. See~\cite{More1978} for more details about the use of QR
factorizations in the Levenberg-Marquardt algorithm. If there is no
interest in using the QR factorization, then the Cholesky
factorization is recommended.

{\lmlovo} was carefully implemented, since it is used as a subroutine
of {\raff} for solving adjustment problems. A solution $\bar x = x_k$
is declared as successful if
\begin{equation}
  \label{impl:success}
  \| \nabla f_{i_k}(\bar x) \|_2 \le \varepsilon
\end{equation}
for some $i_k \in I_{min}(\bar x)$, where $f_{i_k}$ is given
by~(\ref{eq:lovo_fun_2}). The algorithm stops if the gradient cannot
be computed due to numerical errors or if the limit of 400 iterations
has been reached. We also set $\overline{\lambda} = 2$ as default.

In order to show the behavior of {\lmlovo} we solved the problem of
adjusting some data to the one-dimensional logistic model, widely used
in statistics
\[
  \phi(x,  t) =  x_1 + \frac{x_2}{1 + \exp(- x_3 t + x_4)},
\]
where $x \in \R^4$ represents the parameters of the model and
$t \in \R$ represents the variable of the model. In order to generate
random data for the test, the procedures detailed in
Subsection~\ref{sec:impl:outlier} were used. The produced data is
displayed in Figure~\ref{fig:lmlog}, where $r = 10$, $p = 9$ and the
exact solution was $x^* = (6000, -5000, -0.2, -3.7)$. This example has
only $r - p = 1$ outlier.

\begin{figure}[ht]
  \centering

  \includegraphics[width=0.9\textwidth]{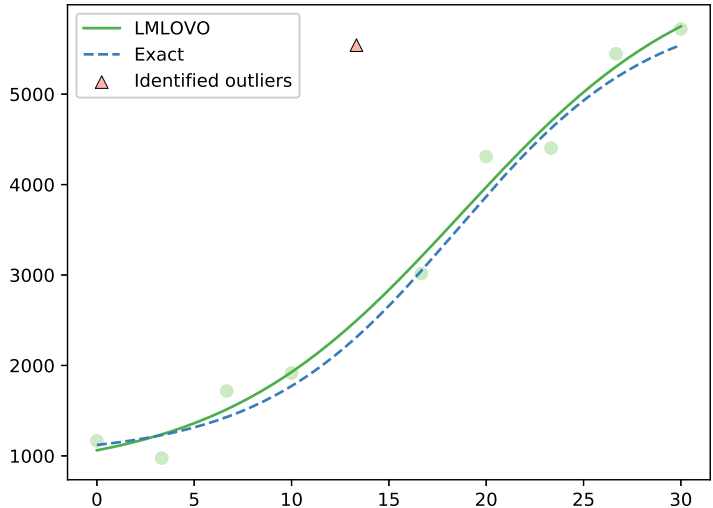}  
  \caption{Test problem simulating an experiment following the
    logistic model. The continuous line represents the adjusted model,
    while the dashed line is the ``exact'' solution. {\lmlovo}
    correctly identifies and ignores the outlier.}
  
\label{fig:lmlog}
\end{figure}

{\lmlovo} was run with its default parameters, using
$x = (0, 0, 0, 0)$ as a starting point and $p = 9$, indicating that
there are 9 points which are trustable for adjusting the model. The
solution found is also shown in Figure~\ref{fig:lmlog}, given by
$\bar x = (795.356, 5749.86, 0.161791, 3.02475)$, as a continuous line,
while the ``exact'' solution is depicted as a dashed line. We observe
that it is not expected the exact solution $x^*$ to be found, since
the points were perturbed. The outlier is correctly identified as the
dark/red triangle.

The example in Figure~\ref{fig:lmlog} has an outlier that is visually
easy to identify, so the correct number of $p = 9$ trusted points was
used. However, that might not be the case, specially if there is an
automated process that needs to perform the adjustments, or if the
model is multi-dimensional. Algorithm {\raff} was implemented to solve
this drawback.

{\raff} was also implemented in the Julia language and is the main
method of the \texttt{RAFF.jl} package~\cite{Castelani2019}. As
already mentioned in Section~\ref{alg:raff}, {\raff} is easily
parallelizable, so serial and parallel/distributed versions are
available, through the \texttt{Distributed.jl} package. The algorithm
(or the user) defines an interval of values of $p$ to test and calls
{\lmlovo} to solve each subproblem
for a given value of $p$. It is known that LOVO
problems have many local minimizers, but we are strongly interested
in global ones. Therefore, the traditional multi-start technique is
applied to generate random starting points. The larger the number of
different starting points, the greater is the chance to find global
minimizers. Also, the computational cost is increased. The
parallel/distributed version of {\raff} solves this drawback,
distributing problems with different values of $p$ among different
cores, processors or even computers.

For the computation of the similarity between solutions $\bar x_p$ and
$\bar x_q$ in Step~\ref{alg:raff:similarity}, the Euclidean norm of
the vector of differences was used
\[
  M_{pq} = \| \bar x_p - \bar x_q \|_2.
\]
For each $p$ in the interval, the best solution $\bar x_p$ obtained
among all the runs of {\lmlovo} for that $p$ is stored. In order to
avoid considering points in the cases where \lmlovo\ has not converged
for some value $p$, we set $M_{ip} = M_{pi} = \infty$ for all
$i = p_{min}, \dots, p_{max}$ in case of failure.

In the preprocessing phase (Step~\ref{alg:raff:preprocess} of \raff)
solutions $\bar x_q$ that clearly are not minimizers are also
eliminated by setting $M_{iq} = M_{qi} = \infty$ for all
$i = p_{min}, \dots, p_{max}$. To detect such points, we check if
$S_q(\bar x_q) > S_p(\bar x_p)$ for some $q < p \le p_{max}$. The idea
is that the less points are considered in the adjustment, the smaller
the residual should be at the global minimizer. The preprocessing
phase also tries to eliminate solution $\bar x_{p_{max}}$. To
  do that, the valid solution $\bar x_p$ with smallest value of
  $S_p(\bar x_p)$, which was not eliminated by the previous strategy,
  is chosen, where $p < p_{max}$. Solution $\bar x_{p_{max}}$ is
eliminated if $S_p(\bar x_p) < S_{p_{max}}(\bar x_{p_{max}})$ and the
number of observed points $(t_i, y_i)$ such that
$|y_i - \phi(\bar x_p, t_i)| < |y_i - \phi(\bar x_{p_{max}}, t_i)|$,
for $i = 1, \dots, r$, is greater or equal than $r / 2$.

The last implementation detail of {\raff} that needs to be addressed
is the choice of $\epsilon$. Although this value can be provided by
the user, we found very hard to select a number that resulted in a
correct adjustment. Very small or very large values of $\epsilon$,
result in the selection of $\bar x_{p_{max}}$ as the solution, since
each solution will be similar to itself or similar to every
  solution, and we always select the largest $p$ in such cases. To
solve this issue, the following calculation has been observed to work
well in practice
\begin{equation}
  \label{impl:tvs}
  \epsilon = \min(M) + \avg(M) / (1 + p_{max}^{1/2}),
\end{equation}
where $M$ is the similarity matrix and function $\avg$ computes the
average similarity by considering only the lower triangular part of
$M$ and ignoring $\infty$ values (which represent eliminated
solutions). If there is no convergence for any value of
$p \in [p_{min}, p_{max}]$, then $\bar x_{p_{max}}$ is returned,
regardless if it has successfully converged or not.

\subsection{Experiments for outlier detection and robust fitting}
\label{sec:impl:outlier}

In the first set of tests, we verified the ability and efficiency of
{\raff} to detect outliers for well known statistical and
mathematical models:
\begin{itemize}
\item Linear model: $\phi(x, t) = x_1 t + x_2$
\item Cubic model: $\phi(x, t) = x_1 t^3 + x_2 t^2 + x_3 t + x_4$
\item Exponential model:
  $\phi(x, t) = x_1 + x_2 \exp( - x_3 t )$
\item Logistic model:
  $\phi(x, t) = x_1 + \frac{x_2}{1 + \exp(- x_3 t + x_4)}$
\end{itemize}
The large number of parameters to be adjusted increases the difficulty
of the problem, since the number of local minima also increases. For
these tests, we followed some ideas described
in~\cite{Motulsky2006a}. For each model, we created 1000 random
generated problems having: 10 points and 1 outlier, 10 points and 2
outliers, 100 points and 1 outlier, and 100 points and 10
outliers. For each combination, we also tested the effect of the
multistart strategy using: 1, 10, 100 and 1000 random starting points.

The procedure for generating each random instance is described as
follows. It is also part of the \texttt{RAFF.jl}
package~\cite{Castelani2019}. Let $x^*$ be the exact solution for this
fitting problem. First, $r$ uniformly spaced values for $t_i$ are
selected in the interval $[1, 30]$. Then, a set
$O \subset \{1, \dots, r\}$ with $r - p$ elements values is randomly
selected to be the set of outliers. For all $i = 1, \dots, r$ a
perturbed value is computed, simulating the results from an
experiment. Therefore, we set $y_i = \phi(x^*, t_i) + \xi_i$,
where $\xi_i \sim \Norm(0, 200)$, if $i \not \in O$ and, otherwise,
$y_i = \phi(x^*, t_i) + 7 s \xi'_i \xi_i$, where
$\xi_i \sim \Norm(0, 200)$, $\xi'_i$ a uniform random number between 1
and 2 and $s \in \{-1, 1\}$ is randomly selected at the beginning of
this process (so all outliers are in the ``same side'' of the
curve). The exact solutions used to generate the instances are given
in Table~\ref{tab:exactsol}. The example illustrated in
Figure~\ref{fig:lmlog} was also generated by this procedure.

\begin{table}[ht]
  \centering
  \begin{tabular}{lc}
    \toprule
    Model & $x^*$ \\ \midrule
    Linear & $(-200, 1000)$ \\
    Cubic & $(0.5, -20, 300, 1000)$ \\
    Exponential & $(5000, 4000, 0.2)$ \\
    Logistic & $(6000, -5000, -0.2, -3.7)$ \\
    \bottomrule
  \end{tabular}
  \caption{Exact solutions used for each model in order to generate
    random instances.}
  \label{tab:exactsol}
\end{table}

The parallel version of {\raff} was run with its default parameters on
a Intel Xeon E3-1220 v3 3.10GHz with 4 cores and 16GB of RAM and Linux
LUbuntu 18.04 operating system. The obtained results are displayed in
Tables~\ref{tab:out1} and~\ref{tab:out2}. In those tables, $r$ is the
number of points representing the experiments, $p$ is the number of
trusted points, \texttt{FR} is the ratio of problems in which all the
outliers have been found (but other points may be declared as
outliers), \texttt{ER} is the ratio of problems where exactly the
$r - p$ outliers have been found, \texttt{TP} is the average number of
correctly identified outliers, \texttt{FP} is the average number of
incorrectly identified outliers, \texttt{Avg.}  is the average number
of points that have been declared as outliers by the algorithm and
\texttt{Time} is the total CPU time in seconds to run all the 1000
tests, measured with the \verb+@elapsed+ Julia macro. By default,
$p_{min} = 0.5 r$ and $p_{max} = r$ are set in the algorithm. The
success criteria~(\ref{impl:success}) of \lmlovo\ was set to
$\varepsilon = 10^{-4}$, while $\overline{\lambda}$ was set to
$2$. For each combination (Model, $r$, $p$) there are 4 rows in
Tables~\ref{tab:out1} and~\ref{tab:out2}, representing different
numbers of multistart trials: 1, 10, 100 and 1000.

\begin{table}[h]\centering

\begin{tabular}{l|c|c|rrrrrrrrr}
  \toprule
  Type & $r$ & $p$ & \texttt{FR} & \texttt{ER} & \texttt{TP} & \texttt{FP} & \texttt{Avg.} & Time (s) \\
  \midrule
    \multirow{16}{*}{Linear}
       & \multirow{8}{*}{10}
              &    \multirow{4}{*}{9}
              & 0.858 & 0.552 & 0.858 &  0.349 &  1.21 &    2.167 \\
          & & & 0.859 & 0.554 & 0.859 &  0.347 &  1.21 &    3.511 \\
          & & & 0.859 & 0.554 & 0.859 &  0.347 &  1.21 &   12.893 \\
          & & & 0.859 & 0.554 & 0.859 &  0.347 &  1.21 &   89.285 \\ \cmidrule{3-9} & & \multirow{4}{*}{8}
              & 0.467 & 0.418 & 1.112 &  0.144 &  1.26 &    2.344 \\
          & & & 0.467 & 0.417 & 1.112 &  0.145 &  1.26 &    3.596 \\
          & & & 0.467 & 0.417 & 1.112 &  0.145 &  1.26 &   13.164 \\
          & & & 0.467 & 0.417 & 1.112 &  0.145 &  1.26 &   88.684 \\ \cmidrule{2-9} & \multirow{8}{*}{100} & \multirow{4}{*}{99}
              & 0.983 & 0.078 & 0.983 & 10.656 & 11.64 &   10.297 \\
          & & & 0.982 & 0.074 & 0.982 & 10.655 & 11.64 &   42.091 \\
          & & & 0.982 & 0.074 & 0.982 & 10.677 & 11.66 &  316.604 \\
          & & & 0.982 & 0.075 & 0.982 & 10.682 & 11.66 & 3082.385 \\ \cmidrule{3-9} & & \multirow{4}{*}{90}
              & 0.916 & 0.069 & 9.858 &  6.768 & 16.63 &    9.799 \\
          & & & 0.916 & 0.070 & 9.858 &  6.798 & 16.66 &   40.581 \\
          & & & 0.915 & 0.070 & 9.854 &  6.782 & 16.64 &  317.722 \\
          & & & 0.917 & 0.070 & 9.860 &  6.799 & 16.66 & 3099.536 \\ \midrule

    \multirow{16}{*}{Cubic}
       & \multirow{8}{*}{10}
              &    \multirow{4}{*}{9}
              & 0.767 & 0.572 & 0.767 &  0.290 &  1.06 &    3.062 \\
          & & & 0.810 & 0.563 & 0.810 &  0.371 &  1.18 &    4.111 \\
          & & & 0.886 & 0.549 & 0.886 &  0.461 &  1.35 &   16.370 \\
          & & & 0.886 & 0.545 & 0.886 &  0.465 &  1.35 &  126.554 \\ \cmidrule{3-9} & & \multirow{4}{*}{8}
              & 0.150 & 0.122 & 0.581 &  0.243 &  0.82 &    2.353 \\
          & & & 0.333 & 0.282 & 0.894 &  0.202 &  1.10 &    4.221 \\
          & & & 0.525 & 0.462 & 1.220 &  0.143 &  1.36 &   16.482 \\
          & & & 0.533 & 0.469 & 1.232 &  0.142 &  1.37 &  126.088 \\ \cmidrule{2-9} & \multirow{8}{*}{100} & \multirow{4}{*}{99}
              & 0.990 & 0.046 & 0.990 & 10.997 & 11.99 &   11.485 \\
          & & & 0.991 & 0.041 & 0.991 & 11.351 & 12.34 &   51.548 \\
          & & & 0.992 & 0.037 & 0.992 & 11.788 & 12.78 &  420.033 \\
          & & & 0.993 & 0.036 & 0.993 & 11.706 & 12.70 & 4123.040 \\ \cmidrule{3-9} & & \multirow{4}{*}{90}
              & 0.945 & 0.064 & 9.838 &  6.941 & 16.78 &   11.325 \\
          & & & 0.930 & 0.063 & 9.816 &  7.299 & 17.11 &   50.685 \\
          & & & 0.941 & 0.063 & 9.835 &  7.584 & 17.42 &  414.084 \\
          & & & 0.940 & 0.060 & 9.833 &  7.714 & 17.55 & 4042.454 \\

  \bottomrule
\end{tabular}

\caption{Results of \raff\ for the detection of outliers for
  linear and cubic models. For each kind of problem, a
    multistart strategy was tested with 1, 10, 100 and 1000 random
    starting points.}
\label{tab:out1}
\end{table}

\begin{table}[h]\centering

\begin{tabular}{l|c|c|rrrrrr}
  \toprule
  Type & $r$ & $p$ & \texttt{FR} & \texttt{ER} & \texttt{TP} & \texttt{FP} & \texttt{Avg.} & Time (s) \\
  \midrule
    \multirow{16}{*}{Exponential}
       & \multirow{8}{*}{10}
              &    \multirow{4}{*}{9}
              & 0.549 & 0.141 & 0.549 &  0.751 &  1.30 &     5.627 \\
          & & & 0.698 & 0.463 & 0.698 &  0.354 &  1.05 &    17.289 \\
          & & & 0.777 & 0.535 & 0.777 &  0.338 &  1.11 &   136.491 \\
          & & & 0.822 & 0.581 & 0.822 &  0.306 &  1.13 &  1215.738 \\ \cmidrule{3-9} & & \multirow{4}{*}{8}
              & 0.213 & 0.080 & 0.771 &  0.459 &  1.23 &     4.417 \\
          & & & 0.292 & 0.264 & 0.921 &  0.152 &  1.07 &    18.053 \\
          & & & 0.406 & 0.367 & 1.138 &  0.148 &  1.29 &   138.862 \\
          & & & 0.516 & 0.480 & 1.246 &  0.092 &  1.34 &  1245.882 \\ \cmidrule{2-9} & \multirow{8}{*}{100} & \multirow{4}{*}{99}
              & 0.982 & 0.089 & 0.982 &  5.444 &  6.43 &    47.235 \\
          & & & 0.992 & 0.046 & 0.992 & 10.673 & 11.66 &   392.884 \\
          & & & 0.992 & 0.047 & 0.992 & 10.794 & 11.79 &  3521.630 \\
          & & & 0.993 & 0.044 & 0.993 & 11.298 & 12.29 & 35418.130 \\ \cmidrule{3-9} & & \multirow{4}{*}{90}
              & 0.532 & 0.133 & 8.234 &  1.921 & 10.15 &    47.915 \\
          & & & 0.972 & 0.060 & 9.946 &  7.181 & 17.13 &   384.544 \\
          & & & 0.980 & 0.054 & 9.959 &  7.611 & 17.57 &  3389.777 \\  
          & & & 0.980 & 0.063 & 9.939 &  7.772 & 17.71 & 34121.028 \\ \midrule

  \multirow{16}{*}{Logistic}
         & \multirow{8}{*}{10}
              &    \multirow{4}{*}{9}
              & 0.009 & 0.001 & 0.009 &  0.116 &  0.13 &     2.705 \\
          & & & 0.245 & 0.156 & 0.245 &  0.419 &  0.66 &     3.714 \\
          & & & 0.420 & 0.309 & 0.420 &  0.292 &  0.71 &    18.144 \\
          & & & 0.524 & 0.364 & 0.524 &  0.279 &  0.80 &   150.310 \\ \cmidrule{3-9} & & \multirow{4}{*}{8}
              & 0.003 & 0.001 & 0.091 &  0.400 &  0.49 &     1.914 \\
          & & & 0.032 & 0.028 & 0.396 &  0.369 &  0.77 &     3.932 \\
          & & & 0.065 & 0.059 & 0.389 &  0.285 &  0.67 &    21.091 \\
          & & & 0.167 & 0.143 & 0.559 &  0.203 &  0.76 &   175.915 \\ \cmidrule{2-9} & \multirow{8}{*}{100} & \multirow{4}{*}{99}
              & 0.535 & 0.006 & 0.535 &  7.105 &  7.64 &     9.426 \\
          & & & 0.536 & 0.012 & 0.536 & 11.754 & 12.29 &    34.022 \\
          & & & 0.894 & 0.063 & 0.894 &  2.529 &  3.42 &   309.246 \\
          & & & 0.929 & 0.095 & 0.929 &  5.276 &  6.21 &  2678.522 \\ \cmidrule{3-9} & & \multirow{4}{*}{90}
              & 0.002 & 0.000 & 4.345 &  3.295 &  7.64 &     9.459 \\
          & & & 0.008 & 0.001 & 4.599 &  5.502 & 10.10 &    38.605 \\
          & & & 0.432 & 0.001 & 6.551 &  4.629 & 11.18 &   319.713 \\
          & & & 0.430 & 0.099 & 8.084 &  2.001 & 10.09 &  2774.727 \\
  
  \bottomrule
\end{tabular}

\caption{Results for \raff\ for the detection of outliers for
  exponential and logistic models. For each kind of problem, a
    multistart strategy was tested with 1, 10, 100 and 1000 random
    starting points.}
\label{tab:out2}
\end{table}

Some conclusions can be drawn from Tables~\ref{tab:out1}
and~\ref{tab:out2}. We can see that \raff\ attains its best
performance for outlier detection when the number of correct points is
not small, even though the percentage of outliers is high. For the
exponential and logistic models, we also can see clearly the effect of
the multistart strategy in increasing the ratio of identified
outliers. In problems with 100 experiments, we observe that in almost
all the cases the number of outliers have been overestimated in
average: although the ratio of outlier identification is high
(\texttt{FR}), the ratio of runs where only the exact outliers have
been detected (\texttt{TR}) is very low, being below 20\% of the
runs. For small test sets, this ratio increases up to 50\%, but
difficult models, such as the exponential and logistic, have very low
ratios. However, as we can observe in Figure~\ref{fig:out}, the shape
and the parameters of the model are clearly free from the influence of
outliers. This observation suggests that maybe the perturbation added
to all the values is causing the algorithm to detect correct points as
outliers. The effect of the number of multi-start runs linearly
increases the runtime of the algorithm, but is able to improve the
adjustment, specially for the logistic model. The exponential model
has an awkward behavior, where the \texttt{ER} ratio decreases when
the number of multi-start runs increases, although the ratio of
problems where all the outliers have been detected increases
(\texttt{FR}). This might indicate that the tolerance~(\ref{impl:tvs})
could be improved. We can also observe that the runtime of the
exponential model is ten times higher than the other models.

When the size of the problem is multiplied by 10 (from 10 points to
100), we observe that the CPU time is multiplied by 5. This occurs
because the time used by communication in the parallel runs is less
important for larger datasets. Again, the exponential model is an
exception.

\begin{figure}[ht]
  \centering

  \begin{tabular}{cc}
    \includegraphics[width=0.45\textwidth]{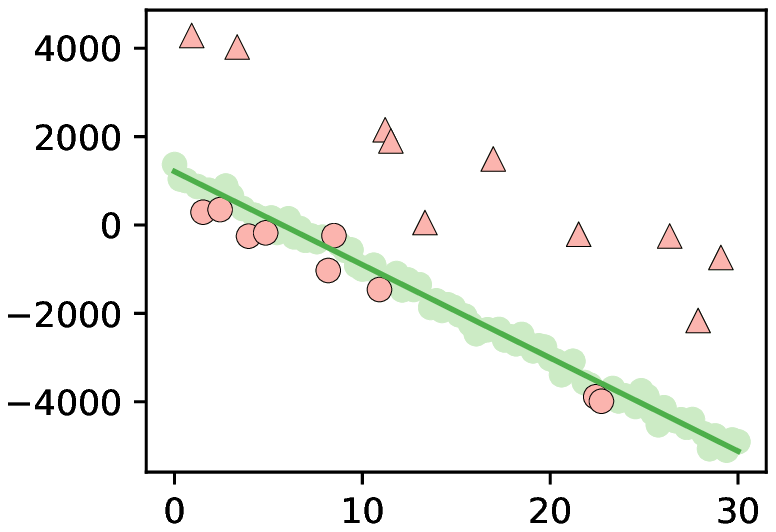} &
    \includegraphics[width=0.45\textwidth]{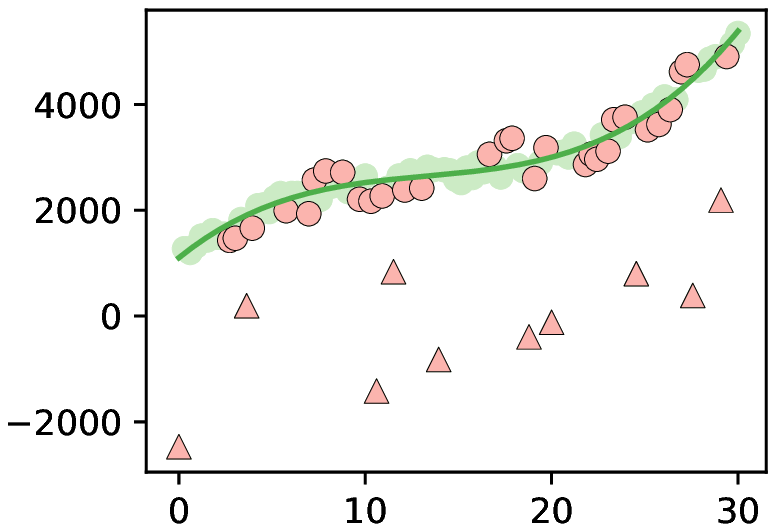} \\
    Linear & Cubic \\
    \includegraphics[width=0.45\textwidth]{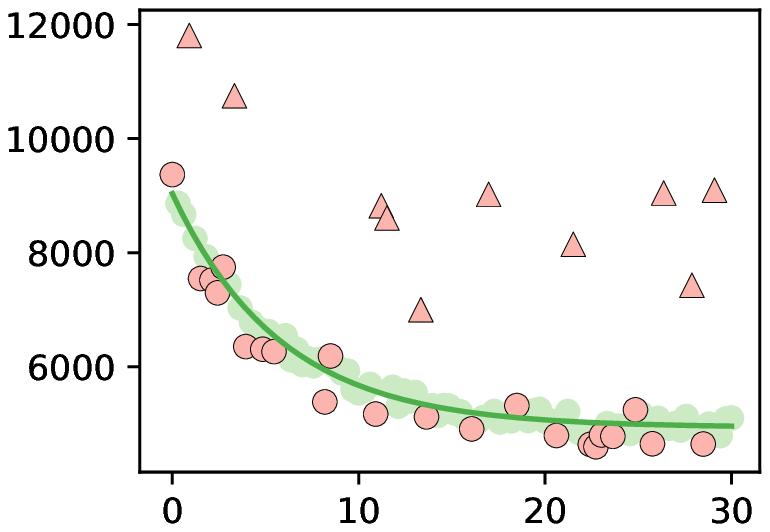} &
    \includegraphics[width=0.45\textwidth]{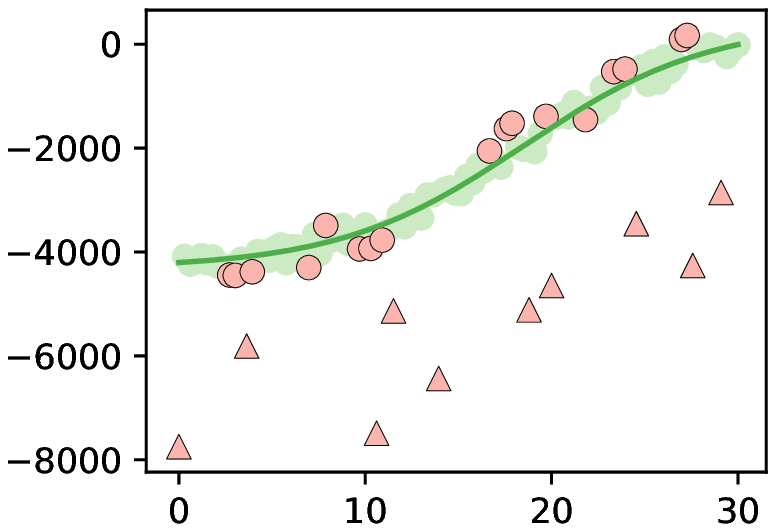} \\
     Exponential & Logistic \\
  \end{tabular}

  \caption{Selected instances of test problems with $r = 100$
      and $p = 90$ and the solutions obtained by {\raff}. All the
    outliers have been correctly identified in those cases
    (dark/red triangles). Non-outliers are described by circles, where
    the dark/red ones represent points incorrectly classified as
    outliers by the algorithm.}
\label{fig:out}
\end{figure}

In a second round of experiments, the same procedure was used to
generate random test problems simulating results from 100 experiments
($r = 100$), where a cluster of 10\% of the points are outliers
($p = 90$). The default interval used for the values of $t$
is $[1, 30]$, and the clustered outliers always belong to $[5,
10]$. Selected instances for each type of model are shown in
Figure~\ref{fig:cout} as well as the solution found by {\raff}. Again,
1000 random problems were generated for each type of model and the
multi-start procedure was fixed to 100 starting points for each
problem. The
obtained results are shown in Table~\ref{tab:cout}. A clustered set of
outliers can strongly affect the model but is also easier to detect,
when the number of outliers is not very large. As we can observe in
Table~\ref{tab:cout}, the ratio of instances where all the outliers
have been successfully detected has increased in all models. The
logistic model is the most difficult to fit since, on average, \raff\
detects 17 points as outliers and 9 of them are
correctly classified (\texttt{TP}). All the other models are able to
correctly identify 10 outliers, on average, and have a higher
\texttt{FR} ratio.

This set of experiments also shows another benefit of the present
approach. If the user roughly knows the number of points that belong
to a given model, such information can be used in the elimination of
random (not necessary Gaussian) noise. The clustered example will be
also shown to be an advantage over traditional robust least-squares
algorithms in Subsections~\ref{sec:impl:algs}
and~\ref{sec:impl:circle}.

\begin{figure}[ht]
  \centering

  \begin{tabular}{cc}
    \includegraphics[width=0.45\textwidth]{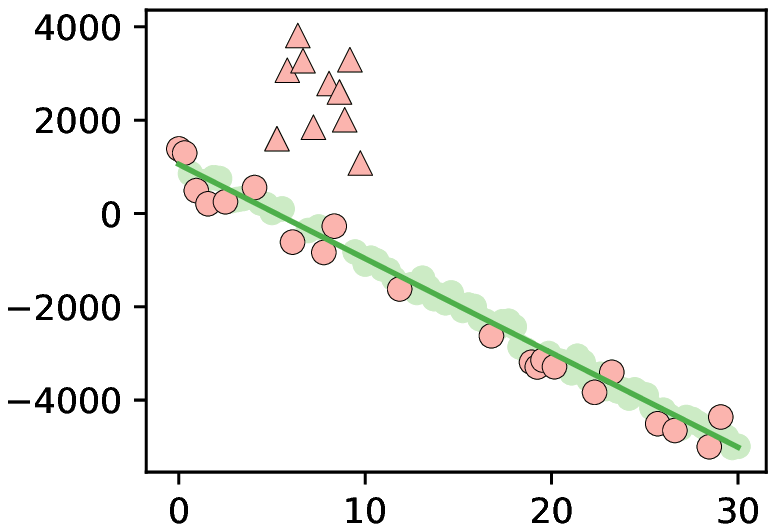}
    & \includegraphics[width=0.45\textwidth]{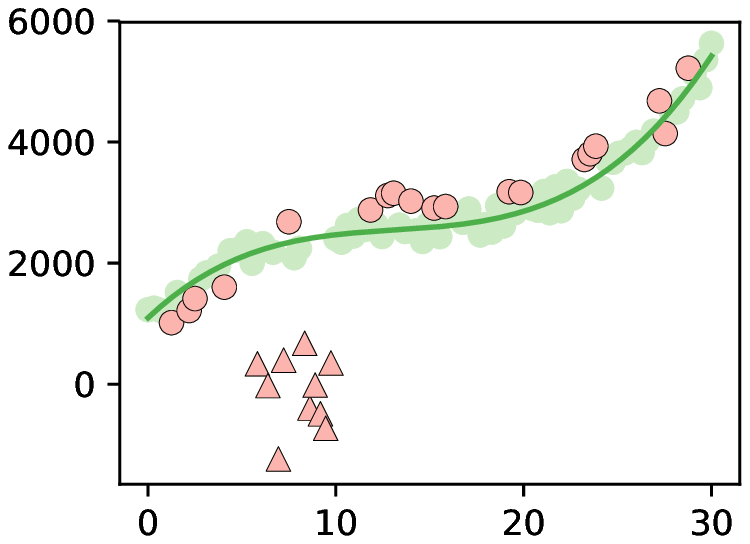}\\
    Linear & Cubic \\
    \includegraphics[width=0.45\textwidth]{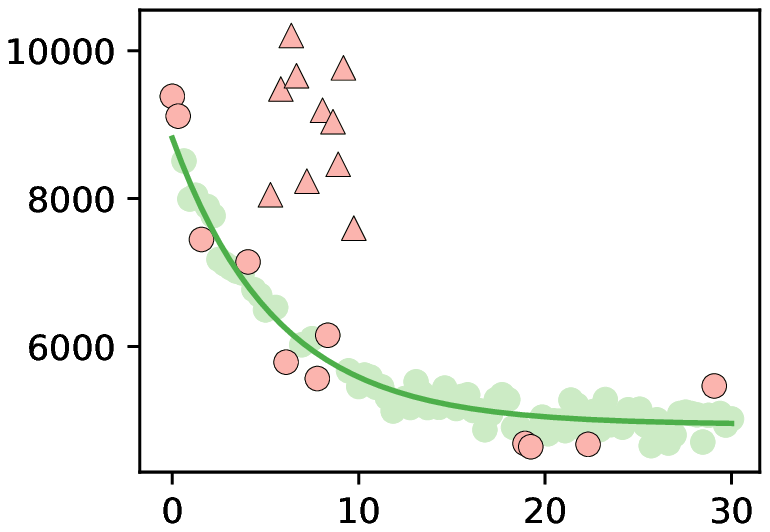}
    & \includegraphics[width=0.45\textwidth]{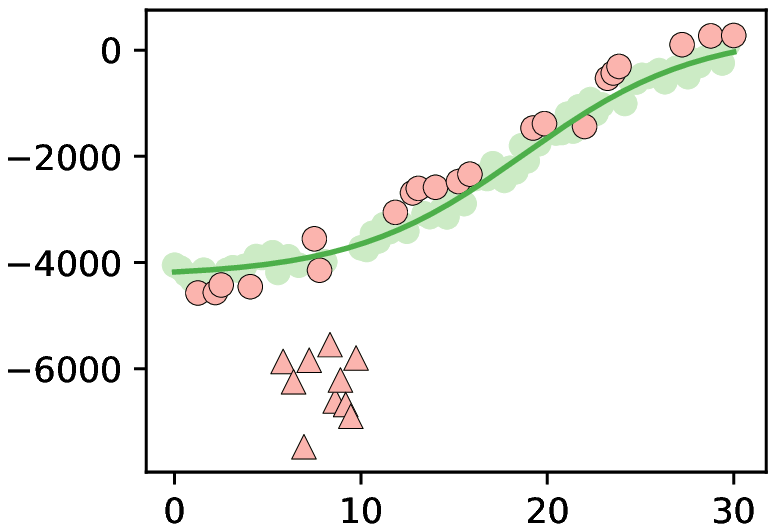}\\
    Exponential & Logistic
  \end{tabular}
  
  \caption{Selected instances of test problems containing a clustered
    set of outliers and the solutions obtained by {\raff}. All the
    outliers have been correctly identified in those cases
    (dark/red triangles). Non-outliers are described by circles, where
    the dark/red ones represent points incorrectly classified as
    outliers by the algorithm.}
\label{fig:cout}
\end{figure}

\begin{table}[h]
  \centering

  \begin{tabular}{lrrrrrrrrr}
    \toprule
    Type & \texttt{FR} & \texttt{ER} & \texttt{TP} & \texttt{FP} & \texttt{Avg.} & Time (s) \\
    \midrule

       Linear & 0.949 & 0.104 & 9.936 & 6.363 & 16.30 &  323.623 \\
        Cubic & 0.991 & 0.047 & 9.991 & 8.368 & 18.36 &  423.634 \\
  Exponential & 0.987 & 0.097 & 9.983 & 6.775 & 16.76 & 3445.482 \\
     Logistic & 0.745 & 0.007 & 8.778 & 8.382 & 17.16 &  326.675 \\
  
   \bottomrule
  \end{tabular}

  \caption{Numerical results for problems with $p = 100$ data points
    and 10\% of clustered outliers.}
  \label{tab:cout}
\end{table}

\subsection{Comparison against robust algorithms}
\label{sec:impl:algs}

We compared the fitting obtained by \raff\ against classical and
robust fitting algorithms provided by the SciPy library version 1.3.1
in
Python\footnote{\url{https://docs.scipy.org/doc/scipy/reference/optimize.html}}. The
robust fitting algorithm in SciPy consists of using different loss
functions in the least squares formulation. The following loss
functions were used: \texttt{linear} (usual least squares
formulation), \texttt{soft\_l1} (smooth approximation of the $\ell_1$
loss function), \texttt{huber} and \texttt{cauchy}. The
\texttt{PyCall.jl} Julia library was used to load and call
  SciPy.

Two more algorithms based on the RANSAC (Random Sample
Consensus)~\cite{Fischler1981}, implemented in C$++$ from the Theia
Vision Library\footnote{\url{http://www.theia-sfm.org/ransac.html}}
version 0.8, were considered. The first one, called here
\texttt{RANSAC}, is the traditional version of RANSAC and the second
one is \texttt{LMED}, based on the work~\cite{Rousseeuw1984}, which
does not need the error threshold, the opposite of case of
\texttt{RANSAC} (where the threshold is problem dependent).

All the algorithms from SciPy were run with their default
parameters. The best model among 100 runs was selected as the solution
for each algorithm and the starting point used was randomly generated
following the normal distribution with $\mu = 0$ and $\sigma = 1$.
Algorithms \texttt{RANSAC} and \lmed\ were run only 10 times, due to
the higher CPU time used and the good quality of the solution
achieved. \texttt{RANSAC} and \lmed\ were run with a maximum of 1000
iterations, sampling 10\% of the data and with the MLE score parameter
activated. In order to adjust models to the sampled data, the Ceres
least squares solver\footnote{http://ceres-solver.org/} version 1.13.0
was used, since Theia has a natural interface to it. All the scripts
used in the tests are available at
\url{https://github.com/fsobral/RAFF.jl}. Once again, the parallel
version of \raff\ was used. The test problems were generated by the
same procedures discussed in
Subsection~\ref{sec:impl:outlier}. However, only one problem (instead
of 1000) for each configuration (Model, $r$, $p$) was used.

Unlike \raff, traditional fitting algorithms do not return the
possible outliers of a dataset. Robust algorithms such as least
squares using $\ell_1$ or Huber loss functions are able to ignore the
effect of outliers, but not to easily detect them. Therefore, for the
tests we selected one instance of each test of type (model, $r$,
$p$), where the models and values for $r$ and $p$ are the same used
in Tables~\ref{tab:out1}--\ref{tab:cout}. The results are displayed
in Table~\ref{tab:comparison}. For each problem $p$ and each algorithm
$a$, we measured the adjustment error $A_{a, p}$ between the model
obtained by the algorithm $\phi_p(x_{a, p}^\star , t)$ and the points
that are non-outliers, which is given by
\[
  A_{a, p} = \sqrt{\sum \limits_{\substack{i \in \mathcal{P}\\ i\
        \mbox{non-outlier}}} (\phi_p(x_{a,p}^\star, t_i) - y_i)^2},
\]
where $\phi_p$ was the model used to adjust problem $p$. Each row of
Table~\ref{tab:comparison} represents one problem and contains the
relative adjustment error for each algorithm, which is
defined by
\begin{equation}
  \label{eq:adjerror}
  \bar A_{a, p} = \frac{A_{a, p}}{\min_{i} \{A_{i, p}\}}
\end{equation}
and the time taken to find the model (in parenthesis). The last row
contains the number of times that each algorithm has found a solution
with adjustment error smaller than 1\% of best, smaller than 10\% of
the best and smaller than 20\% of the best adjustment measure found
for that algorithm, respectively, in all the test set. We can
observe that \raff, \lmed\ and \texttt{soft\_l1} were the best
algorithms. \raff\ was the solver that found the best models in most
of the problems (11/24), followed by \lmed\ (9/24). Its parallel
version was consistently the fastest solver among all. It is important
to observe that \raff\ was the only who was easily adapted to run in
parallel. However, the parallelism is related only to the solution of
subproblems for different $p$, not to the multistart runs,
which are run sequentially. Therefore, \raff\ solves
considerably more problems per core than the other algorithms in a
very competitive CPU time. When parallelism is turned of, the CPU time
is very similar to the traditional least squares algorithm
(\texttt{linear}). Also, \raff\ was the only one that easily outputs
the list of possible outliers without the need of any threshold
parameter. Clustered instance (cubic, 100, 90) and instance (logistic,
10, 9) and the models obtained by each algorithm are shown in
Figure~\ref{fig:comparison}.

\begin{sidewaystable}[ht!]
  \centering
  \begin{tabular}{l|rrrrrrr}
    
    \toprule
    (Model, $r$, $p$)  & \texttt{            linear }& \texttt{           soft\_l1 }& \texttt{             huber }& \texttt{            cauchy }& \texttt{            RANSAC }& \texttt{              LMED }& \texttt{           RAFF } \\ \midrule
      linear,   10,    9 &       1.35 (  0.60)&       1.06 (  2.75)&       1.06 (  3.11)&       1.37 (  1.64)&       4.28 (  0.52)&       1.00 (  0.53)&       1.35 (  1.48) \\
      linear,   10,    8 &       3.62 (  0.60)&       1.18 (  2.55)&       1.18 (  2.89)&       1.14 (  2.07)&       1.17 (  0.54)&       1.01 (  0.55)&       1.00 (  0.12) \\
      linear,  100,   99 &       1.03 (  0.60)&       1.00 (  3.22)&       1.00 (  8.43)&       1.00 (  2.30)&       1.22 (  3.95)&       1.00 (  3.98)&       1.00 (  0.82) \\
      linear,  100,   90 &       1.75 (  0.60)&       1.00 (  3.41)&       1.00 (  8.41)&       1.04 (  1.65)&       1.23 (  4.71)&       1.01 ( 47.23)&       1.04 (  0.67) \\
       cubic,   10,    9 &       1.27 (  0.81)&       1.00 (  9.44)&       1.11 ( 11.42)&       4.71 (  4.67)&      22.26 (  0.98)&       2.47 (  0.99)&       4.91 (  1.04) \\
       cubic,   10,    8 &       4.06 (  0.73)&       1.18 ( 14.80)&       1.18 ( 16.28)&       1.19 (  4.75)&     114.88 (  1.00)&       1.21 (  1.00)&       1.00 (  0.18) \\
       cubic,  100,   99 &       1.05 (  0.72)&       1.00 ( 21.08)&       2.24 ( 22.20)&       1.09 (  6.00)&       1.12 (  8.00)&       1.04 (  8.04)&       1.08 (  0.81) \\
       cubic,  100,   90 &       1.76 (  0.73)&       1.00 ( 21.59)&       1.78 ( 22.86)&       1.19 (  5.79)&       1.15 (  7.94)&       1.00 ( 79.15)&       1.00 (  0.75) \\
       expon,   10,    9 &       1.16 (  5.04)&      12.88 ( 12.26)&      12.88 ( 12.59)&       4.68 ( 10.70)&      14.22 (  0.76)&       1.00 (  0.76)&       1.16 (  1.35) \\
       expon,   10,    8 &       1.34 (  3.28)&       7.74 ( 12.14)&      10.07 ( 12.45)&       1.00 ( 10.89)&       1.72 (  0.72)&       1.61 (  0.73)&       1.34 (  0.25) \\
       expon,  100,   99 &       1.00 (  3.86)&       8.57 ( 13.61)&       8.58 ( 12.86)&       1.22 ( 10.99)&       2.04 (  5.32)&       1.05 (  5.32)&       1.03 (  5.19) \\
       expon,  100,   90 &       1.76 (  3.65)&       8.99 ( 13.53)&       8.99 ( 13.18)&       3.70 ( 11.04)&       1.93 (  5.35)&       1.00 ( 53.63)&       1.02 (  5.32) \\
    logistic,   10,    9 &       1.68 (  3.24)&       1.31 ( 18.98)&       7.60 ( 20.55)&       1.91 ( 17.28)&      24.45 (  0.89)&       2.27 (  0.90)&       1.00 (  0.99) \\
    logistic,   10,    8 &       4.23 ( 10.27)&       1.00 ( 18.14)&       9.40 ( 19.18)&      26.13 (  3.98)&      39.56 (  0.85)&       1.05 (  0.87)&      22.19 (  0.14) \\
    logistic,  100,   99 &       1.01 (  4.44)&       2.34 ( 17.68)&       8.88 ( 18.84)&       2.64 (  8.24)&       1.02 (  7.75)&       1.00 (  7.76)&       1.01 (  0.51) \\
    logistic,  100,   90 &       1.78 (  2.57)&       1.10 ( 18.16)&       7.13 ( 19.10)&       7.53 (  8.06)&       1.06 (  7.59)&       1.00 ( 76.37)&       1.01 (  0.53) \\ \midrule \multicolumn{8}{c}{Clustered} \\ \midrule
      linear,   10,    8 &       4.10 (  0.52)&       1.18 (  2.33)&       1.18 (  2.54)&       1.00 (  2.02)&       4.69 (  0.54)&       1.07 (  0.55)&       1.00 (  0.10) \\
      linear,  100,   90 &       1.92 (  0.62)&       1.00 (  2.76)&       1.00 (  5.70)&       1.12 (  2.10)&       1.11 (  4.66)&       1.02 ( 46.45)&       1.03 (  0.68) \\
       cubic,   10,    8 &       4.29 (  0.72)&       1.00 ( 11.90)&       1.00 ( 14.59)&      11.59 (  4.87)&      31.72 (  1.04)&      10.79 (  1.02)&      12.61 (  0.14) \\
       cubic,  100,   90 &       2.18 (  0.73)&       1.02 ( 20.39)&       2.47 ( 22.25)&       1.07 (  6.31)&       1.52 (  7.70)&       1.09 ( 73.61)&       1.00 (  0.83) \\
       expon,   10,    8 &       1.32 ( 12.79)&       4.11 ( 10.54)&       1.81 ( 10.34)&       1.20 (  9.18)&       1.00 (  0.75)&       2.69 (  0.78)&       4.22 (  0.29) \\
       expon,  100,   90 &       1.99 (  3.56)&       8.58 ( 13.31)&       8.57 ( 12.79)&       4.99 ( 10.90)&       1.74 (  5.66)&       1.14 ( 57.14)&       1.00 (  5.82) \\
    logistic,   10,    8 &       3.79 ( 10.17)&       1.00 ( 15.28)&       7.74 ( 18.09)&      22.46 ( 10.53)&      16.32 (  0.66)&       1.40 (  0.65)&      12.31 (  0.13) \\
    logistic,  100,   90 &       2.00 (  4.87)&       4.62 ( 18.10)&       6.77 ( 19.83)&       7.35 (  7.10)&       1.40 (  7.78)&       1.03 ( 78.43)&       1.00 (  0.52) \\ \midrule
                         &     2,     5,     6&     9,    11,    15&     4,     5,     9&     3,     6,    11&     1,     3,     7&     9,    16,    17&    11,    16,    17 \\ \bottomrule

  \end{tabular}

  \caption{Comparison against different robust fitting algorithms
    using one instance of each test problem and 100 random
      starting points as a multistart strategy. \texttt{RANSAC} and \lmed\
      run for 10 random initial points.}
  \label{tab:comparison}
\end{sidewaystable}

\begin{figure}[ht!]
  \centering
  \begin{tabular}{cc}
    \includegraphics[width=0.45\textwidth]{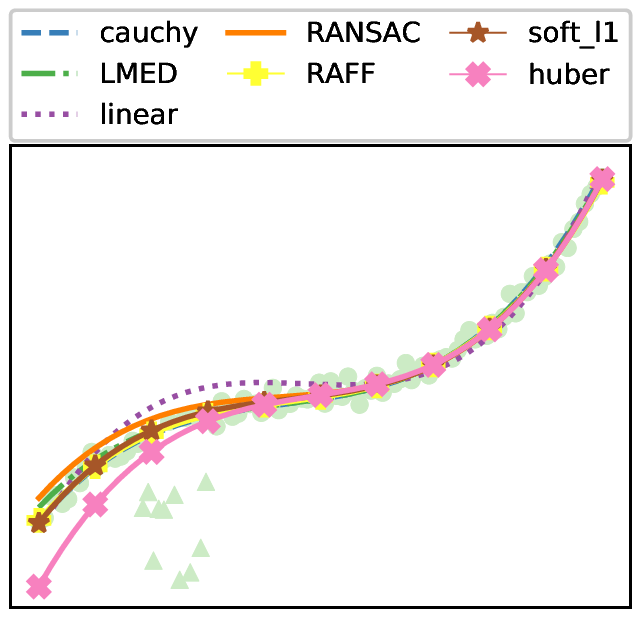}
    & \includegraphics[width=0.45\textwidth]{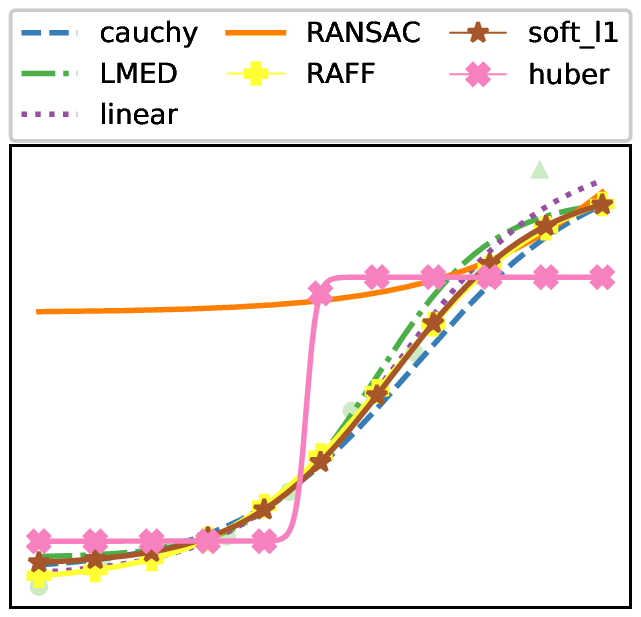}
  \end{tabular}
  \caption{Two problems and the models found for each algorithm. On
    the left, a cubic model with 100 points and a set of 10 clustered
    outliers. On the right, a logistic model with 10 points and only
    one outlier.}
  \label{fig:comparison}
\end{figure}

\subsection{Experiments for circle detection}
\label{sec:impl:circle}

The problem of detecting patterns in images is very discussed in the
vision area in Computer Science. LOVO algorithms have also been
applied to solving such problems, as a nonlinear programming
alternative to traditional
techniques~\cite{Andreani2007curvedect}. The drawback, again, is the
necessity of providing a reasonable number of trusted points. {\raff}
allows the user to provide an interval of possible trusted points, so
the algorithm can efficiently save computational effort when trying to
find patterns in images. Since LOVO problems need a model to be
provided, circle detection is a perfect application to the algorithm.

We followed tests similar to~\cite{Yu2010a}, using a circular model
\[
  \phi(x, t) = (t_1 - x_1)^2 + (t_2 - x_2)^2 - x_3^2
\]
instead of the ellipse model considered in the work. Two test sets
were generated. In the first set $r = 100$ points were uniformly
distributed in the border of the circle with center $(-10, 30)$ and
radius $2$. If the point is not an outlier, a random perturbation
$\xi \sim \Norm(0, 0.1)$ is added to each one of its $t_1$ and $t_2$
coordinates. For outliers, the random noise is given by
$\xi \sim \Norm(0, 2)$, as suggested in~\cite{Yu2010a}. In the second
set, $r = 300$ was considered. The same circumference was used and $p$
points (non-outliers) were uniformly distributed in the circumference
and slightly perturbed with a noise $\xi \sim \Norm(0, 0.1)$ as
before. The remaining $300 - p$ points (the outliers) were randomly
distributed in a square whose side was 4 times the radius of the
circle, using the uniform distribution. Nine problems were generated
in each test set, with outlier ratio ranging from 10\% up to 90\%
(i. e. ratio of non-outliers decreasing 90\% to 10\%).

The same algorithms were compared, the only difference from
Subsection~\ref{sec:impl:algs} is that the error threshold of
\texttt{RANSAC} was reduced to 10 and 100 random starting points near
$(1, 1, 1)$ were used for all algorithms, except \texttt{RANSAC} and
\lmed. For those two algorithms, we kept the number of trials
to 10. Also, we tested two versions of \raff. In pure \raff, we
decreased the lower bound $p_{min}$ from its default value $0.5 r$ to
the value of $p$, when $p$ falls below the default. In \raff int, we
used the option of providing upper and lower bounds for the number of
trusted points. If $p$ is the current number of non-outliers in the
instance, the interval given to \raff int is
$[p - 0.3r, p + 0.3r] \cap [0, r]$. The measure~(\ref{eq:adjerror})
was used and the results are shown in Figure~\ref{fig:circleerr}.

We can observe that \raff, \lmed\ and \texttt{cauchy} achieved the
best results. \raff\ found worse models than most of the robust
algorithms in the problems of the first test set, although the results
are still very close. Its relative performance increases as the
outlier ratio increases. This can be explained as the strong
attraction that \raff\ has to finding a solution similar to
traditional least squares algorithms. In Figure~\ref{fig:interval} we
can see that \raff\ has difficulty in finding outliers that belong to
the interior of the circle. To solve this drawback, \raff\ also
accepts a lower bound in the number of outliers, rather than only an
upper bound. This ability is useful for large datasets with a lot of
noise, as is the case of the second test set, and allows the detection
of inner outliers. This is represented by \raff int. We can see in
Figure~\ref{fig:circleerr} that the performance of both versions of
\raff\ is better than traditional robust algorithms in the case of a
large number of outliers.

\begin{figure}[ht]
  \centering
  \includegraphics[width=0.9\textwidth]{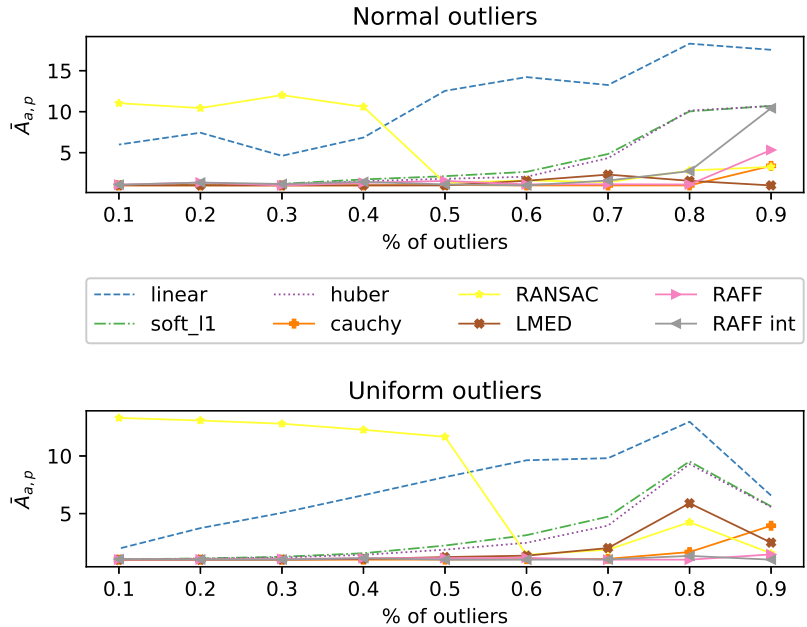}
  \caption{Relative adjustment error in the circle detection problem
    for increasing outlier ratio and two different types of
      perturbation: by normal and uniform
    distributions.}
  \label{fig:circleerr}
\end{figure}

\begin{figure}[ht!]
  \centering

  \begin{tabular}{cc}
    \toprule
    \multicolumn{2}{c}{Outliers generated by normal distribution} \\ \midrule
    \includegraphics{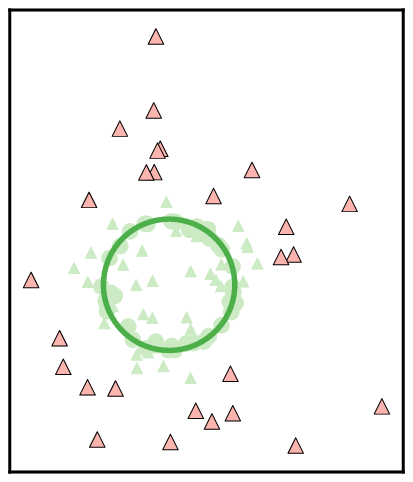}
    & \includegraphics{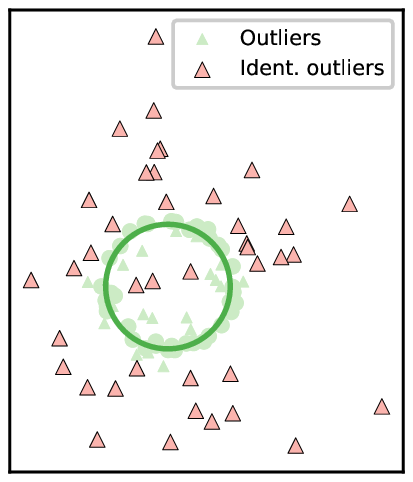} \\
    At most 40\% of outliers & Outlier ratio between 10\% and 60\% \\ \midrule
    \multicolumn{2}{c}{Outliers generated by uniform distribution}\\ \midrule
    \includegraphics{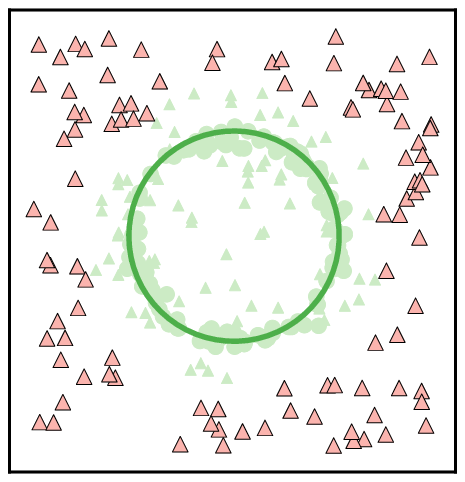}
    & \includegraphics{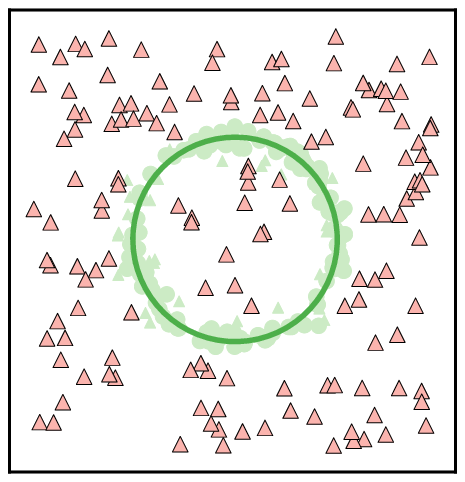} \\
    At most 40\% of outliers & Outlier ratio between 20\% and 60\% \\ \bottomrule
  \end{tabular}
  
  \caption{Difference of outlier detection when upper bounds on the
    outlier ratio are provided. The inner outliers are harder to
    detect, since the error they cause in the model is smaller.}
  \label{fig:interval}
\end{figure}

\section{Conclusions}
\label{sec:conclusions}

In this paper, we have described a LOVO version of the
Levenberg-Marquardt algorithm for solving nonlinear equations, which
is specialized to the adjustment of models where the data contains
outliers. The theoretical properties of the algorithm were studied and
convergence to weakly stationary points has been proved. To overcome
the necessity of providing the number of outliers in the algorithm, a
voting system has been proposed. A complete framework to robust
adjustment of data was implemented in the Julia language and compared
to public available and well tested robust fitting algorithms. The
proposed algorithm was shown to be competitive, being able to find
better adjusted models in the presence of outliers in most of the
problems. In the circle detection problem, the proposed algorithm was
also shown to be competitive and had a good performance even when the
outlier ration exceeds 50\%. The implemented algorithm and all the
scripts used for testing and generation of the tests are freely
available and constantly updated at
\url{https://github.com/fsobral/RAFF.jl}.

\bibliography{library}

\begin{thebibliography}{10}

\bibitem{Andreani2007curvedect}
R.~Andreani, G.~Cesar, R.~Cesar-Jr., J.~M. Martínez, and P.~J.~S. Silva.
\newblock Efficient curve detection using a {Gauss-Newton} method with
  applications in agriculture.
\newblock In {\em Proc. 1st International Workshop on Computer Vision
  Applications for Developing Regions in Conjunction with ICCV
  2007-CVDR-ICCV07}, 2007.

\bibitem{Andreani2008}
R.~Andreani, J.~M. Mart{\'i}nez, L.~Mart{\'i}nez, and F.~S. Yano.
\newblock Continuous optimization methods for structure alignments.
\newblock {\em Mathematical Programming}, 112(1):93--124, 2008.

\bibitem{Andreani2009}
R.~Andreani, J.~M. Mart{\'{\i}}nez, L.~Mart{\'{\i}}nez, and F.~S. Yano.
\newblock {Low Order-Value Optimization and applications}.
\newblock {\em Journal of Global Optimization}, 43(1):1--22, 2009.

\bibitem{Bergou2018}
E.~Bergou, Y.~Diouane, and V.~Kungurtsev.
\newblock Convergence and iteration complexity analysis of a
  {Levenberg-Marquardt} algorithm for zero and non-zero residual inverse
  problems, 2018.

\bibitem{Birgin2011}
E.~G. Birgin, L.~F. Bueno, N.~Kreji{\'{c}}, and J.~M. Mart{\'i}nez.
\newblock {Low Order-Value} approach for solving {VaR}-constrained optimization
  problems.
\newblock {\em Journal of Global Optimization}, 51(4):715--742, 2011.

\bibitem{Castelani2019}
E.~V. Castelani, R.~Lopes, W.~Shirabayashi, and F.~N.~C. Sobral.
\newblock {RAFF.jl: Robust Algebraic Fitting Function in Julia}.
\newblock {\em Journal of Open Source Software}, 4(39):1385, 2019.

\bibitem{Cook77}
R.~D. Cook.
\newblock {Detection of influential observations in linear regression}.
\newblock {\em Technometrics}, 19:15--18, 1977.

\bibitem{duda1971use}
R.~O. Duda and P.~E. Hart.
\newblock Use of the {Hough} transformation to detect lines and curves in
  pictures.
\newblock Technical report, Sri International Menlo Park Ca Artificial
  Intelligence Center, 1971.

\bibitem{Fischler1981}
M.~A. Fischler and R.~C. Bolles.
\newblock {Random sample consensus: a paradigm for model fitting with
  applications to image analysis and automated cartography}.
\newblock {\em Communications of the ACM}, 24(6):381--395, 1981.

\bibitem{Hampel86}
F.~R. Hampel, E.~Ronchetti, P.~Rousseeuw, and W.~Stahel.
\newblock {\em {Robust Statistics: The Approach Based on Influence Functions}}.
\newblock John Wiley \& Sons, New York, NY, USA, 1986.

\bibitem{Hodge2004}
V.~Hodge and J.~Austin.
\newblock A survey of outlier detection methodologies.
\newblock {\em Artificial Intelligence Review}, 22(2):85--126, 2004.

\bibitem{hough1962}
P.~V. Hough.
\newblock Method and means for recognizing complex patterns, Dec.~18 1962.
\newblock US Patent 3,069,654.

\bibitem{illingworth1988survey}
J.~Illingworth and J.~Kittler.
\newblock A survey of the {Hough} transform.
\newblock {\em Computer vision, graphics, and image processing}, 44(1):87--116,
  1988.

\bibitem{JiangHZ17}
Z.~Jiang, Q.~Hu, and X.~Zheng.
\newblock Optimality condition and complexity of {Order-Value Optimization}
  problems and {Low Order-Value Optimization} problems.
\newblock {\em Journal of Global Optimization}, 69(2):511--523, 2017.

\bibitem{Martinez2012a}
J.~M. Mart{\'{i}}nez.
\newblock {Generalized order-value optimization}.
\newblock {\em TOP}, 20(1):75--98, 2012.

\bibitem{Martinez2007}
L.~Mart{\'i}nez, R.~Andreani, and J.~M. Mart{\'i}nez.
\newblock Convergent algorithms for protein structural alignment.
\newblock {\em BMC Bioinformatics}, 8(1):306, 2007.

\bibitem{More1978}
J.~J. Mor{\'{e}}.
\newblock {The Levenberg-Marquardt algorithm: implementation and theory}.
\newblock In {\em Numerical analysis}, pages 105--116. Springer Berlin
  Heidelberg, 1978.

\bibitem{Motulsky2006a}
H.~J. Motulsky and E.~R. Brown.
\newblock {Detecting outliers when fitting data with nonlinear regression – a
  new method based on robust nonlinear regression and the false discovery
  rate}.
\newblock {\em BMC Bioinformatics}, 7(123), 2006.

\bibitem{Nocedal2006}
J.~Nocedal and S.~J. Wright.
\newblock {\em {Numerical Optimization}}.
\newblock Springer, New York, 2nd edition, 2006.

\bibitem{Rousseeuw1984}
P.~J. Rousseeuw.
\newblock Least median of squares regression.
\newblock {\em Journal of the American Statistical Association},
  79(388):871--880, 1984.

\bibitem{shif88}
R.~E. Shiffler.
\newblock Maximum {Z} scores and outiliers.
\newblock {\em The American Statistics}, 42(1):79--80, 1988.

\bibitem{Sreevidya2014}
S.~S. Sreevidya.
\newblock A survey on outlier detection methods.
\newblock {\em International Journal of Computer Science and Information
  Technologies}, 5(6):8153--8156, 2014.

\bibitem{xu1990new}
L.~Xu, E.~Oja, and P.~Kultanen.
\newblock A new curve detection method: randomized {Hough} transform ({RHT}).
\newblock {\em Pattern recognition letters}, 11(5):331--338, 1990.

\bibitem{Yu2010a}
J.~Yu, H.~Zheng, S.~R. Kulkarni, and H.~Poor.
\newblock Two-stage outlier elimination for robust curve and surface fitting.
\newblock {\em EURASIP Journal on Advances in Signal Processing}, 2010(1),
  2010.

\end{thebibliography}

\end{document}